\documentclass[11pt, reqno]{article}


\usepackage{amsmath,amssymb,amsfonts,amsthm}
\usepackage{color}

\usepackage[colorlinks=true,linkcolor=blue,citecolor=blue,urlcolor=blue,pdfborder={0 0 0}]{hyperref}
\usepackage{cleveref}

\usepackage{CJKutf8}

\usepackage{caption}
\usepackage{thmtools}

\usepackage{mathrsfs}

\crefname{theorem}{Theorem}{Theorems}
\crefname{thm}{Theorem}{Theorems}
\crefname{lemma}{Lemma}{Lemmas}
\crefname{lem}{Lemma}{Lemmas}
\crefname{remark}{Remark}{Remarks}
\crefname{prop}{Proposition}{Propositions}
\crefname{defn}{Definition}{Definitions}
\crefname{corollary}{Corollary}{Corollaries}
\crefname{conjecture}{Conjecture}{Conjectures}
\crefname{question}{Question}{Questions}
\crefname{chapter}{Chapter}{Chapters}
\crefname{section}{Section}{Sections}
\crefname{figure}{Figure}{Figures}
\crefname{example}{Example}{Examples}

\theoremstyle{plain}
\newtheorem{thm}{Theorem}[section]

\newtheorem{lemma}[thm]{Lemma}
\newtheorem{theorem}[thm]{Theorem}

\newtheorem{corollary}[thm]{Corollary}

\newtheorem{prop}[thm]{Proposition}
\newtheorem{conjecture}[thm]{Conjecture}

\theoremstyle{definition}

\newtheorem{problem}[thm]{Problem}

\theoremstyle{remark}
\newtheorem{remark}[thm]{Remark}

\numberwithin{equation}{section}


\renewcommand{\P}{\mathbb P}
\newcommand{\E}{\mathbb E}

\newcommand{\Z}{\mathbb Z}
\newcommand{\N}{\mathbb N}








\usepackage[margin=0.9in]{geometry}
\usepackage{tikz}
\usepackage{pgfplots}
\usepackage{extarrows}
\usepackage{titling}
\usepackage{commath}
\usepackage{wasysym}
\usepackage{mathtools}
\usepackage{titlesec}
\newcommand{\eps}{\varepsilon}
\usepackage{bbm}
\usepackage{setspace}
\setstretch{1}
\usepackage{enumitem}
\usepackage{booktabs}

\usepackage[textsize=tiny]{todonotes}

\usepackage{cite}

\newcommand{\Aut}{\operatorname{Aut}}






\def\P{\mathbb{P}}








\DeclareMathSymbol{\leqslant}{\mathalpha}{AMSa}{"36} 
\DeclareMathSymbol{\geqslant}{\mathalpha}{AMSa}{"3E} 
\DeclareMathSymbol{\eset}{\mathalpha}{AMSb}{"3F}     




\renewcommand{\epsilon}{\varepsilon}








\makeatletter
\pgfdeclareshape{slash underlined}
{
  \inheritsavedanchors[from=rectangle] 
  \inheritanchorborder[from=rectangle]
  \inheritanchor[from=rectangle]{north}
  \inheritanchor[from=rectangle]{north west}
  \inheritanchor[from=rectangle]{north east}
  \inheritanchor[from=rectangle]{center}
  \inheritanchor[from=rectangle]{west}
  \inheritanchor[from=rectangle]{east}
  \inheritanchor[from=rectangle]{mid}
  \inheritanchor[from=rectangle]{mid west}
  \inheritanchor[from=rectangle]{mid east}
  \inheritanchor[from=rectangle]{base}
  \inheritanchor[from=rectangle]{base west}
  \inheritanchor[from=rectangle]{base east}
  \inheritanchor[from=rectangle]{south}
  \inheritanchor[from=rectangle]{south west}
  \inheritanchor[from=rectangle]{south east}
  \inheritanchorborder[from=rectangle]
  \foregroundpath{
    \southwest \pgf@xa=\pgf@x \pgf@ya=\pgf@y
    \northeast \pgf@xb=\pgf@x \pgf@yb=\pgf@y
    \pgf@xc=\pgf@xa
    \advance\pgf@xc by .5\pgf@xb
    \pgf@yc=\pgf@ya
    \advance\pgf@xc by -1.3pt
    \advance\pgf@yc by -1.8pt
    \pgfpathmoveto{\pgfqpoint{\pgf@xc}{\pgf@yc}}
    \advance\pgf@xc by  2.6pt
    \advance\pgf@yc by  3.6pt
    \pgfpathlineto{\pgfqpoint{\pgf@xc}{\pgf@yc}}
    \pgfpathmoveto{\pgfqpoint{\pgf@xa}{\pgf@ya}}
    \pgfpathlineto{\pgfqpoint{\pgf@xb}{\pgf@ya}}
 }
}
\tikzset{nomorepostaction/.code=\let\tikz@postactions\pgfutil@empty}

\makeatother


\makeatletter
\DeclareFontFamily{OMX}{MnSymbolE}{}
\DeclareSymbolFont{MnLargeSymbols}{OMX}{MnSymbolE}{m}{n}
\SetSymbolFont{MnLargeSymbols}{bold}{OMX}{MnSymbolE}{b}{n}
\DeclareFontShape{OMX}{MnSymbolE}{m}{n}{
    <-6>  MnSymbolE5
   <6-7>  MnSymbolE6
   <7-8>  MnSymbolE7
   <8-9>  MnSymbolE8
   <9-10> MnSymbolE9
  <10-12> MnSymbolE10
  <12->   MnSymbolE12
}{}
\DeclareFontShape{OMX}{MnSymbolE}{b}{n}{
    <-6>  MnSymbolE-Bold5
   <6-7>  MnSymbolE-Bold6
   <7-8>  MnSymbolE-Bold7
   <8-9>  MnSymbolE-Bold8
   <9-10> MnSymbolE-Bold9
  <10-12> MnSymbolE-Bold10
  <12->   MnSymbolE-Bold12
}{}

\let\llangle\@undefined
\let\rrangle\@undefined
\DeclareMathDelimiter{\llangle}{\mathopen}%
                     {MnLargeSymbols}{'164}{MnLargeSymbols}{'164}
\DeclareMathDelimiter{\rrangle}{\mathclose}%
                     {MnLargeSymbols}{'171}{MnLargeSymbols}{'171}
\makeatother



\pretitle{\begin{flushleft}\Large}
\posttitle{\par\end{flushleft}\vskip 0.5em
}
\preauthor{\begin{flushleft}}
\postauthor{
\\
\vspace{0.5em}
\footnotesize{The Division of Physics, Mathematics and Astronomy, California Institute of Technology\\ Email: 
\href{mailto:t.hutchcroft@caltech.edu}{t.hutchcroft@caltech.edu}}
\end{flushleft}}
\predate{\begin{flushleft}}
\postdate{\par\end{flushleft}}
\title{\bf Small-ball estimates for random walks on groups} 

\renewenvironment{abstract}
 {\par\noindent\textbf{\abstractname.}\ \ignorespaces}
 {\par\medskip}

\titleformat{\section}
  {\normalfont\large \bf}{\thesection}{0.4em}{}

\author{{\bf Tom Hutchcroft}}

\begin{document}

\date{\small{\today}}

\maketitle

\setstretch{1.1}

\begin{abstract}
We prove a new inequality bounding the probability that the random walk on a group has small total displacement in terms of the spectral and isoperimetric profiles of the group. This inequality implies that if the random walk on the group is diffusive then Cheeger's inequality is sharp in the sense that the isoperimetric profile $\Phi$ and spectral profile $\Lambda$ of the group are related by $\Lambda \simeq \Phi^2$. Our inequality also yields substantial progress on a conjecture of Lyons, Peres, Sun, and Zheng (2017) stating that for any transient random walk on an infinite, finitely generated group, the expected occupation time of the ball of radius $r$ is $O(r^2)$: We prove that this conjecture holds for every group of superpolynomial growth whose
   spectral profile is slowly varying, which we conjecture is always the case. For groups of exponential or stretched-exponential growth satisfying a further mild regularity assumption on their spectral profile, our method yields the strong quantitative small-ball estimate
\[-\log \P\bigl(d(X_0,X_n) \leq \eps n^{1/2}\bigr) \succeq \frac{1}{\eps^2} \wedge (-\log \P(X_n=X_0)),\] which is sharp for the lamplighter group. Finally, we prove that the regularity assumptions needed to apply the strongest versions our results are satisfied for several classical examples where the spectral profile is not known explicitly, including the first Grigorchuk group and Thompson's group $F$.
\end{abstract}

\section{Small-ball estimates and their consequences}
\label{sec:intro}

In this paper we prove the following inequality relating three of the most fundamental quantities describing the large-scale geometry of a group: the \emph{isoperimetric profile}, the \emph{spectral profile}, and the \emph{rate of escape} of the random walk. 

\begin{theorem}
\label{thm:main_inequality}  Let $G=(V,E)$ be a connected, locally finite, unimodular transitive graph, let $X=(X_k)_{k\geq 0}$ be the simple random walk on $G$, and let $\Phi$ and $\Lambda$ denote the isoperimetric profile and spectral profile of $G$ respectively. The inequality
\begin{align*}
\P\left(d(X_0,X_k) \leq r\right) 
&\leq 2 \exp\left[ - \frac{\log 2}{2}\max\left\{\ell \geq 0 :  \frac{\ell \log 2}{\Lambda(2^{\ell+1}\Phi^{-1}(c/r))} \leq k \right\} \right]
\end{align*}
holds for 
 every pair of integers $k,r \geq 1$, where $c=\min\{\Aut(G) e \cap E^\rightarrow_o : e\in E^\rightarrow\}/2\deg(o)$.
\end{theorem}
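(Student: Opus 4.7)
The plan is to combine three classical ingredients: a Cauchy--Schwarz reduction replacing the small-ball probability by an on-diagonal heat-kernel estimate, a Coulhon--Saloff-Coste-type inequality bounding ball volumes by the inverse isoperimetric profile, and the Morris--Peres evolving-sets technique controlling the return probability via the spectral profile. By vertex transitivity I may take $X_0=o$. Cauchy--Schwarz together with the identity $\sum_y p_k(o,y)^2 = p_{2k}(o,o)$ (which uses reversibility and the constancy of $\deg$ on a transitive graph) then gives the first reduction
\[
\P\bigl(d(X_0,X_k)\le r\bigr)^2 \;=\; \Bigl(\sum_{y\in B(o,r)} p_k(o,y)\Bigr)^2 \;\le\; |B(o,r)|\cdot p_{2k}(o,o).
\]

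Next I would establish, via a mass-transport argument in the unimodular transitive setting, the bound that every ball has edge-boundary-to-volume ratio at most $2\deg(o)/(cr)$ with $c$ exactly the orbit-dependent constant in the theorem statement. Comparing with the definition of the isoperimetric profile then forces $|B(o,r)|\le \Phi^{-1}(c/r)=:V$, since otherwise $B(o,r)$ would itself witness a violation of the isoperimetric inequality at its own volume. The orbit factor in $c$ arises because, in the mass-transport step, edges must be averaged over their $\Aut(G)$-orbits relative to the orbit of the origin.

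The remaining and most delicate step is the spectral-profile bound on the return probability. I would obtain this via the Morris--Peres evolving-sets machinery (or a Faber--Krahn / Nash-type argument in the style of Coulhon), whose output I would package as a halving lemma: once $p_{2j}(o,o)\le 4/W$, a further time of at most $\log 2/\Lambda(2W)$ steps brings $p$ down to $4/(2W)$. Iterating this $\ell$ times starting from the volume $V$ (absorbing a short burn-in into constants) and using that $\Lambda$ is nonincreasing to bound the telescoping time $\sum_{j=0}^{\ell-1}\log 2/\Lambda(2^{j+1}V)$ by its largest term $\ell\log 2/\Lambda(2^{\ell+1}V)$, one obtains
\[
p_{2k}(o,o) \;\le\; \frac{4}{2^{\ell+1}V}
\quad\text{whenever}\quad
k \;\ge\; \frac{\ell\log 2}{\Lambda(2^{\ell+1}V)}.
\]
Substituting this and $|B(o,r)|\le V$ into the Cauchy--Schwarz bound, then taking square roots, yields the theorem.

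The main obstacle is the evolving-sets step: one must produce the halving iteration with explicit (not merely asymptotic) constants and handle the initial burn-in correctly so that the exponential coefficient $(\log 2)/2$ in the theorem emerges cleanly, rather than an unspecified constant. A secondary, more routine point is pinning down the precise orbit-dependent constant $c$ in the Coulhon--Saloff-Coste step via mass transport in the unimodular transitive (rather than Cayley) setting.
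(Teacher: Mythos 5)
Your reduction---Cauchy--Schwarz to $\P\bigl(d(X_0,X_k)\le r\bigr)^2\le |B(o,r)|\,p_{2k}(o,o)$, followed by volume and heat-kernel bounds---is not the paper's argument, and it cannot prove the theorem: the Cauchy--Schwarz step loses a factor of the ball volume, which is fatal for groups of superpolynomial growth. Concretely, for the lamplighter group $\Z_2\wr\Z$ with $r=\eps k^{1/2}$ one has $|B(o,r)|\geq e^{cr}=e^{c\eps k^{1/2}}$ while $p_{2k}(o,o)\simeq e^{-\Theta(k^{1/3})}$, so your first displayed bound is vacuous for large $k$, whereas the theorem yields $e^{-c/\eps^2}$. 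The introduction of the paper notes exactly this: the Cauchy--Schwarz/heat-kernel route only gives an $n^{1/3}$ displacement lower bound for exponential growth. Relatedly, your halving lemma is false as stated because the burn-in is not short: the correct Coulhon--Grigor'yan/Goel--Montenegro--Tetali bound requires $k\gtrsim\int_1^W \frac{dv}{v\Lambda(v)}$, and the portion of this integral from $1$ to $V=\Phi^{-1}(c/r)$ dominates the $\ell$ subsequent halvings (for the lamplighter it is of order $(\log V)^3\approx r^3$, versus $\ell r^2$ for the halvings). Your version, with the burn-in ``absorbed into constants,'' would assert $p_{2k}(o,o)\lesssim 1/V\approx e^{-r}$ already at $k\approx r^2$, contradicting $p_{2k}(o,o)\simeq e^{-k^{1/3}}=e^{-r^{2/3}}$ there. (A smaller point: your justification of $|B(o,r)|\le\Phi^{-1}(c/r)$ is backwards---a ball with \emph{small} boundary-to-volume ratio only upper-bounds $\Phi$ at volume $|B(o,r)|$ and hence \emph{lower}-bounds $|B(o,r)|$ by $\Phi^{-1}$; the correct input is the Coulhon--Saloff-Coste inequality \eqref{eq:Coulhon_Saloff_Coste}, a statement that \emph{arbitrary} sets of at most half the ball's volume have large boundary, which does give $|B(o,r)|\lesssim\Phi^{-1}(c/r)$.)

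The paper circumvents both problems by never passing through $p_{2k}(o,o)$. It fixes a near-optimal isoperimetric set $W_n$ and applies Hermon's bound (\cref{prop:chi_n_ell}) on $\chi_Q(n,\ell)$, which controls $\P_{W_n}(X_k\in W_n)$ for a walk started \emph{uniformly on $W_n$}; since the initial density already has small $L^2$ norm, there is no burn-in, and the decay $2^{-\ell/2}$ is reached after time $\ell\log 2/\Lambda(n2^{\ell+1})$. The link to displacement is not Cauchy--Schwarz over a ball but the invariant pseudometric $d_n(x,y)=|W_n|-\operatorname{Haar}\{\gamma: x,y\in\gamma W_n\}$, whose Lipschitz constant with respect to the graph metric is at most $O(\Phi(n)|W_n|)$ by unimodularity; a first-moment argument then shows $d_n(X_0,X_k)\geq |W_n|/2$ with probability at least $1-2^{1-\ell/2}$, forcing $d(X_0,X_k)\gtrsim 1/\Phi(n)$. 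Some such replacement for the ball-volume factor is essential, and it is the genuinely new idea you would need to supply.
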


Here we think of each edge of $G$ as consisting of a pair of oriented edges in opposite directions, write $E^\rightarrow$ for the set of oriented edges of $G$, write $E^\rightarrow_o$ for the set of oriented edges emanating from some base point $o\in V$, and write $\Gamma e$ for the orbit of $e$ under the automorphism group of $G$. In particular, the constant $c$ appearing in this theorem is uniformly  bounded away from zero for edge-transitive graphs and for graphs of bounded degree. \emph{Unimodularity} is a technical condition which holds for all amenable transitive graphs and all Cayley graphs of finitely generated groups \cite{MR1082868}.
An extension to general (not necessarily nearest-neighbour) random walks is given in \cref{thm:main_inequality_general}.

In the remainder of this section we will state all the remaining definitions needed to understand this theorem and explain its consequences towards various open problems in the area, referring the reader to e.g.\ \cite{zheng2022asymptotic,LP:book,Woess,yadin2024harmonic,Pete} for more detailed background and to the introduction of \cite{saloff2021isoperimetric} for a historical account.



\subsection{Background and definitions}

To simplify notation, we give definitions under the assumption that the transition matrix is symmetric and hence that the counting measure is stationary, which holds automatically in the transitive case.

\medskip

\noindent \textbf{Spectral and isoperimetric profiles.}  Let $P:V\times V\to [0,1]$ be a stochastic matrix defined on a countable set $V$ which is symmetric in the sense that $P(x,y)=P(y,x)$ for every $x,y\in V$. The \textbf{isoperimetric profile} (a.k.a.\ $L^1$ isoperimetric profile) of $P$ is defined by
\[
\Phi(n)=\Phi_P(n) := \inf \left\{ \frac{1}{|\Omega|} \sum_{x\in \Omega, y \in V\setminus \Omega} P(x,y) : \Omega \subset V, |\Omega|\leq n \right\}.
\]
The \textbf{spectral profile} (a.k.a.\ $L^2$ isoperimetric profile) of $P$ is defined by
\begin{equation*}
\Lambda(n) = \Lambda_P(n) : = \inf\left\{\lambda_P(\Omega) : \Omega \subset V, |\Omega| \leq n\right\}
\end{equation*}
where
\begin{equation*}
\lambda_P(\Omega) := \inf\left\{\langle(I-P)f,f\rangle : \text{support}(f) \subset \Omega, \|f\|_2 = 1\right\}
\end{equation*}
is the spectral gap of the chain killed upon leaving $\Omega$. One can also give an analogous definition of the isoperimetric profile in terms of $L^1$ operator norms for the killed random walk, see e.g.\ \cite[Section 3.1]{zheng2022asymptotic}. When $G$ is a regular graph, we refer to the isoperimetric and spectral profiles of the simple random walk transition matrix on $G$ as the isoperimetric and spectral profiles of $G$, and omit the subscript $P$ when this does not cause confusion. 

\medskip

\noindent \textbf{Equivalent rates of growth and decay.}
Although the spectral and isoperimetric profiles are defined for \emph{graphs}, they can also be thought of as quantities associated to a \emph{group} provided that one considers them modulo appropriate notions of asymptotic equivalence. Given two monotone, non-negative real functions $f$ and $g$ defined for all sufficiently large positive real inputs, we write $f\simeq g$ if there exist positive constants $c_1,c_2,c_3,c_4>0$ and $x_0<\infty$ such that
\[
c_1f( c_2 x) \leq g(x) \leq c_3 f(c_4 x)
\]
for all $x\geq x_0$. We also write $\lesssim$ and $\gtrsim$ for the one-sided inequality versions of this relation. This notion of equivalence distinguishes between e.g.\ different powers of $x$, different powers of $\log x$, and exponentials of different powers of $x$, but does not distinguish between e.g.\ exponentials with different constants such as $2^x$ and $3^x$. It is easily proven that if $\Phi_1,\Phi_2$ and $\Lambda_1,\Lambda_2$ are the isoperimetric and spectral profiles associated to two different symmetric generating sets of some finitely generated group $\Gamma$ then $\Phi_1\simeq \Phi_2$ and $\Lambda_1 \simeq \Lambda_2$. Given a decreasing function $f$ we will write $f^{-1}(x):=\inf\{t:f(t)\leq x\}$, while for increasing functions we will write $f^{-1}(x)=\inf\{t:f(t)\geq x\}$.

\medskip

\noindent \textbf{Relation to volume growth and heat kernel decay.} 
The spectral and isoperimetric profiles are also closely related to various other important quantitative features of groups, including the \emph{volume growth} and \emph{heat kernel decay}. Indeed, it is a classical theorem of Coulhon and Saloff-Coste \cite{MR1232845} (extended to unimodular transitive graphs in \cite{LMS08} and non-unimodular transitive graphs in \cite{tessera2020sharp}, with different constants) that the isoperimetric profile can be lower bounded in terms of the \emph{inverse growth function} by
\begin{equation}
\label{eq:Coulhon_Saloff_Coste}
\Phi(n) \geq \frac{1}{2\deg(o) \operatorname{Gr}^{-1}(2n)},
\end{equation}
where $\operatorname{Gr}(r)$ denotes the number of points in the ball of radius $r$ and $\operatorname{Gr}^{-1}(n)=\inf\{r\geq 0:\operatorname{Gr}(r)\geq n\}$. (See also \cite{pittet2022coulhon,correia2024isoperimetry} for optimized forms of this inequality.)
On the other hand, we have trivially that $\operatorname{Gr}(n+1)\geq (1+\Phi(\operatorname{Gr}(n)) \operatorname{Gr}(n)$ for every $n\geq 0$,
which implies by a small calculation that there exists a universal constant $C$ such that
\begin{equation}
\label{eq:Growth_Bounds_Isoperimetry}
\Phi(n) \leq \frac{C}{ \operatorname{Gr}^{-1}(n)-\operatorname{Gr}^{-1}(n/2)}.
\end{equation}
 For transitive graphs of \emph{polynomial growth}, $\operatorname{Gr}^{-1}(n)-\operatorname{Gr}^{-1}(n/2)$ is of the same order as $\operatorname{Gr}^{-1}(2n)$ for large $n$ and one can use these inequalities to argue that $\Phi(n) \asymp n^{-1/d}$, where $d$ is the integer volume growth dimension whose existence is guaranteed by the structure theory of transitive graphs of polynomial growth \cite{MR623534,MR811571}. For graphs of superpolynomial growth, the connection between the growth and isoperimetric profile is less tight, and we have e.g.\ that
\begin{align*}
\Biggl(\operatorname{Gr}(n) \gtrsim \exp\left[n^\alpha\right] \Biggr) &\Rightarrow \Biggl(\Phi(n)\gtrsim (\log n)^{-1/\alpha}  \Biggr) \Rightarrow \Biggl(\operatorname{Gr}(n) \gtrsim \exp\left[ n^{\alpha/(1+\alpha)}\right]\Biggr)
\end{align*}
for each $\alpha > 0$. 
Among groups of exponential growth, nonamenable groups such as free groups have non-decaying isoperimetric profile while the lamplighter group over $\Z$ has $\Phi(n)\simeq 1/\log n$, so that both constraints above are sharp.
Erschler \cite{MR2254627} has shown that groups of subexponential growth can have isoperimetric profiles of arbitrarily slow decay, while Brieussel and Zheng \cite{brieussel2015speed} have shown that groups of exponential growth can have essentially arbitrary decay of their isoperimetric profile between non-decay and $1/\log n$ decay. 
(The relation $\Lambda \simeq \Phi^2$ holds for all the examples constructed in \cite{brieussel2015speed}.)

It has also been shown in a series of works by Coulhon and Grigor’yan \cite{MR1418518,coulhon1997diagonal} that the asymptotic decay of return probabilities (a.k.a.\ the heat kernel) $P^{2n}(o,o)$  and the asymptotic decay of the spectral profile $\Lambda$  determine one another under mild regularity assumptions\footnote{More generally, if one quantity admits one-sided bounds in terms of a ``sufficiently regular'' function then the other quantity admits analogous one-sided bounds via the integral transform of that function defined implicitly in \eqref{eq:Grigoryan}.} via
\begin{equation}
\label{eq:Grigoryan}
P^{2n}(o,o) \simeq \frac{1}{\psi(2n)} \qquad \text{ where  $\psi$ is defined  implicitly by } \qquad t=\int_1^{\psi(t)} \frac{1}{x\Lambda(x)} \dif x.
\end{equation}
If one makes the stronger regularity assumption that $\Lambda(2^n)$ is \emph{doubling} (i.e.\ satisfies $\Lambda(2^{2n}) \geq c\Lambda(2^n)$ for some $c>0$ and every $n\geq 1$), as is the case when e.g.\ $\Lambda(x) \simeq (\log x)^{-\alpha}$, then this relationship may be expressed more succinctly as 
\begin{equation}
\label{eq:HeatKernel}
P^{2n}(o,o) \simeq \exp[-\Psi^{-1}(n)] \qquad \text{ where } \qquad \Psi(x):= \frac{x}{\Lambda(2^x)}.
\end{equation}
See also \cite{bendikov2012spectral} for a more precise asymptotic analysis of the heat kernel under this doubling condition.
For example, the $d$-dimensional lamplighter group $\Z_2 \wr \Z^d$ has $\Lambda(n)\simeq (\log n)^{-2/d}$ and $P^{2n}(o,o) \simeq \exp[-n^{d/(d+2)}]$. The relation \eqref{eq:HeatKernel} can be thought of as a refinement of Kesten's theorem \cite{Kesten1959b}, which states that $G$ is amenable if and only if $P^{2n}(o,o)$ decays subexponentially. Let us also mention that the decay rate of $P^{2n}(o,o)$ was proven to be a quasi-isometry invariant of groups by Pittet and Saloff-Coste \cite{pittet2000stability}.

\medskip
\noindent\textbf{Rates of escape.}
A further important quantity associated to a graph is the \emph{rate of escape} of the random walk, i.e., the asymptotic rate of growth of $d(X_0,X_n)$ where $X$ is the random walk and $d$ denotes the graph distance. In contrast to the other quantities we have discussed, it is a major open problem whether this is a \emph{group quantity} in the sense that different Cayley graphs of the same group have the same rate of escape for random walk. The stability problem is open even for  \emph{ballistic} (a.k.a.\ \emph{positive speed}) case $d(X_0,X_n)\asymp n$, where the rate of escape estimate is equivalent to non-triviality of the Poisson boundary \cite{avez1976harmonic,derriennic1980quelques,kaimanovich1983random}. Similarly, one can formulate various different precise quantities describing the rate of escape (e.g. the mean, median, and $L^2$ norm of $d(X_0,X_n)$), and it is open in general to prove that these different notions are $\simeq$-equivalent. See e.g.\ \cite[Section 3.3]{zheng2022asymptotic} and \cite[Chapter 13]{LP:book} for further background.

\medskip

\noindent \textbf{Relation to other quantities.}
The growth, escape rate, and heat kernel decay are related via the elementary inequality
\[
P^{2n}(o,o)=\sum_{v\in V}P^n(o,v)^2 \geq \sum_{d(o,v)\leq r}P^n(o,v)^2 \geq \frac{\P(d(X_0,X_n)\leq r)^2}{\operatorname{Gr}(r)},
\]
where the final inequality follows by Cauchy-Schwarz. In particular, if $\mathcal{E}(n)$ denotes the median of $d(X_0,X_n)$ then $P^{2n}(o,o) \gtrsim 1/\operatorname{Gr}(\mathcal{E}(n))$. It is much less obvious to what extent one can \emph{upper bound} return probabilities in terms of the growth and rate of escape. These quantities are more closely related to the \emph{entropy} $\E [-\log P^n(X_0,X_n)]$, which in turn satisfies some (fairly weak) two-sided inequalities relating it to $P^{2n}(o,o)$ as established in \cite{doi:10.1093/imrn/rny034}.
 The rate of escape of the random walk on a group is also closely related to the \emph{metric-embeddability} properties of the group \cite{austin2009wreath}: for example, any group admitting a bi-Lipschitz equivariant embedding into Hilbert space must have a diffusive random walk. See e.g.\ \cite[Section 3.4]{zheng2022asymptotic} for further background. 

\medskip
\noindent \textbf{Diffusive lower bounds.} It is a classical fact that the simple random walk on $\Z^d$ is \emph{diffusive}, meaning that $|X_n|$ is typically of order $n^{1/2}$ when $n$ is large, and in fact the same is true for any infinite, finitely generated group of polynomial growth by a theorem of Hebisch and Saloff-Coste \cite{HebSaCo93}. It was proven by Lee and Peres \cite{MR3127886}, building on an unpublished argument of Erschler, that this is in fact the \emph{slowest possible} rate of escape for the random walk on an infinite, finitely generated group: If $X$ is the random walk on any infinite, connected, locally finite transitive graph then $\E d(X_0,X_n) \gtrsim n^{1/2}$. (Note that this does \emph{not} follow from the methods discussed in the previous paragraph, which only yield an $n^{1/3}$ lower bound for groups of exponential volume growth.) This is in stark contrast to random walks on \emph{fractals}, which are often subdiffusive \cite{KumagaiBook}. The Lee-Peres proof works by case analysis according to whether or not the graph is amenable: In the nonamenable case the walk trivially has positive speed by Kesten's theorem, while in the amenable case the graph always admits an equivariant harmonic embedding into Hilbert space and the claim can be proven via martingale analysis. (Indeed, the weaker statement that $\E [d(X_0,X_n)^2]\gtrsim n$ follows immediately by the orthogonality of martingale increments.) Note that the diffusive lower bound can still be sharp for groups of exponential growth, as it is for the lamplighter group $\Z_s \wr \Z$, and Brieussel and Zheng \cite{brieussel2015speed} have shown that groups of exponential growth can have essentially arbitrary rates of escape between $n^{1/2}$ and $n$.

\subsection{Consequences of \cref{thm:main_inequality}}

\medskip
\noindent \textbf{When is Cheeger's inequality saturated?} 
A fundamental relationship between these two notions of isoperimetry, known as \textbf{Cheeger's inequality}, states that
\begin{equation}
\label{eq:Cheeger}
\frac{1}{2}\Phi^2 \leq \Lambda \leq \Phi.
\end{equation}
A graph is said to be \textbf{amenable} if $\lim_{n\to\infty} \Phi(n)=\lim_{n\to\infty}\Lambda(n)=0$, where these two conditions are equivalent by Cheeger's inequality. In all known examples of infinite transitive graphs it is in fact the case that the \emph{lower bound} of \eqref{eq:Cheeger} is sharp in the sense that $\Lambda$ is of order $\Phi^2$. This suggests the following two closely related problems:


\begin{problem}
\label{prob:PittetSaloffCoste}
Does the asymptotic relation $\Lambda \simeq \Phi^2$ hold for simple random walk on every finitely generated group?
\end{problem}

\begin{problem}
\label{prob:PittetSaloffCoste2}
Do $\Phi$ and $\Lambda$ define the same quasi-isometry invariant of groups, in the sense that two groups have $\Phi_{G_1}\simeq \Phi_{G_2}$ if and only if $\Lambda_{G_1}\simeq \Lambda_{G_2}$?
\end{problem}

\Cref{prob:PittetSaloffCoste} was posed by Pittet in \cite{pittet2000isoperimetric}, while 
\Cref{prob:PittetSaloffCoste2} is often credited to Pittet and Saloff-Coste, who posed the problem as a conjecture in \cite{pittet1999amenable}. According to Saloff-Coste [personal communication], both problems have been around as folklore since the mid 1980's.
As far as we can tell, there is no widespread consensus in the community as to whether \Cref{prob:PittetSaloffCoste}  should admit a positive solution or not; Tianyi Zheng [personal communication] has suggested it may be false for certain self-similar groups. \Cref{prob:PittetSaloffCoste2} appears to be regarded as more likely to be true, although the results of Brieussel and Zheng \cite{brieussel2015speed} show that the two problems are essentially equivalent in the case that $\Phi$ has at most $1/\log n$ decay. (Indeed, they construct groups of exponential growth which have essentially arbitrary decay of $\Phi$ between $1/\log n$ and non-decay, all of which satisfy the relation $\Lambda \simeq \Phi^2$.)


\medskip

 \cref{thm:main_inequality} allows us to resolve Problems \ref{prob:PittetSaloffCoste} and \ref{prob:PittetSaloffCoste2} positively in the case that the random walk is diffusive. 

\begin{corollary}
\label{cor:PittetSaloffCoste}
Let $G=(V,E)$ be a connected, locally finite, transitive graph and let $\Phi$ and $\Lambda$ denote the isoperimetric profile and spectral profile of $G$ respectively. If the random walk on $G$ is diffusive in the sense that $\liminf_{n\to\infty}\P(d(X_0,X_n)\leq C n^{1/2})>0$ for some $C<\infty$ then $\Lambda \simeq \Phi^2$.
\end{corollary}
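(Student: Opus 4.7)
The plan is to feed the diffusivity hypothesis directly into \cref{thm:main_inequality} and extract $\Lambda\lesssim \Phi^2$; combined with Cheeger's inequality \eqref{eq:Cheeger}, which already gives $\Phi^2\lesssim \Lambda$, this yields $\Lambda\simeq \Phi^2$. First I would fix $C<\infty$ and $\delta>0$ with $\P(d(X_0,X_n)\leq Cn^{1/2})\geq \delta$ for all $n$ sufficiently large, and apply \cref{thm:main_inequality} with $k=n$ and $r=\lceil Cn^{1/2}\rceil$. Denoting by $M_n$ the maximum $\ell$ appearing in the statement of that theorem, the resulting inequality $\delta\leq 2\exp\bigl[-\frac{\log 2}{2} M_n\bigr]$ forces $M_n\leq M_0:=\lceil 2\log(2/\delta)/\log 2\rceil$ uniformly in $n$. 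Taking $\ell=M_0+1$, which must therefore violate the defining condition of $M_n$, gives the scale-dependent bound
\[
\Lambda\bigl(2^{M_0+2}\,\Phi^{-1}(c/\lceil Cn^{1/2}\rceil)\bigr) \;\leq\; \frac{(M_0+1)\log 2}{n}.
\]

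The next step is to reparametrize this from the $(k,r)$ variables of the small-ball estimate back to the set-size variable $s$ in which $\Phi$ and $\Lambda$ naturally live. Given a large integer $s$, taken large enough that $\Phi(s)$ is sufficiently small (recall that any diffusive transitive graph must be amenable by Kesten's theorem, so $\Phi(s)\to 0$), I will set $n(s):=\lfloor c^2/(4C^2\Phi(s)^2)\rfloor$. For $\Phi(s)$ small this choice guarantees $\lceil Cn(s)^{1/2}\rceil\leq c/\Phi(s)$, hence $c/\lceil Cn(s)^{1/2}\rceil\geq \Phi(s)$, and then by monotonicity of the non-increasing function $\Phi^{-1}$ also $\Phi^{-1}(c/\lceil Cn(s)^{1/2}\rceil)\leq s$. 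Combining with monotonicity of $\Lambda$, the previous display upgrades to
\[
\Lambda(2^{M_0+2} s) \;\leq\; \Lambda\bigl(2^{M_0+2}\,\Phi^{-1}(c/\lceil Cn(s)^{1/2}\rceil)\bigr) \;\leq\; \frac{(M_0+1)\log 2}{n(s)} \;\lesssim\; \Phi(s)^2.
\]
Setting $u=2^{M_0+2}s$, this reads $\Lambda(u)\leq K\,\Phi(u/2^{M_0+2})^2$ for all large $u$, which is exactly $\Lambda\lesssim \Phi^2$ in the $\simeq$-sense of \cref{sec:intro} (which permits a multiplicative constant inside the argument of $\Phi$).

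The main subtlety is the reparametrization itself: \cref{thm:main_inequality} controls $\Lambda$ only at the specific scale $2^{M_0+2}\Phi^{-1}(c/r)$, so one must invert the relationship between $r$ and set-size carefully, and do so without relying on any continuity or regularity hypothesis on $\Phi$. The manoeuvre above sidesteps this by choosing $n(s)$ slightly on the small side so that $\Phi^{-1}(c/\lceil Cn(s)^{1/2}\rceil)\leq s$ holds unconditionally, and then using monotonicity of $\Lambda$ to transfer the bound from that potentially smaller scale back up to scale $s$. A mild side effect is the factor $c_4=2^{-(M_0+2)}<1$ that ends up inside the argument of $\Phi$ in the final bound; this is precisely why it is essential that the $\simeq$-equivalence tolerates constants inside the argument.
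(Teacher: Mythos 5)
Your argument is correct and takes essentially the same route as the paper: both feed $r \asymp n^{1/2}$ into \cref{thm:main_inequality}, deduce a uniform bound $M_0$ on the maximal $\ell$ from the non-vanishing small-ball probability, and rearrange to $\Lambda\bigl(C'\Phi^{-1}(\eps)\bigr)\lesssim \eps^2$ before converting back to the set-size variable --- your explicit substitution $n(s)=\lfloor c^2/(4C^2\Phi(s)^2)\rfloor$ just replaces the paper's short proof by contradiction for that final conversion, and both rest on the same monotonicity of $\Phi^{-1}$ and $\Lambda$. The only point worth stating explicitly is that the amenability you derive from diffusivity (via Kesten) is also what supplies the unimodularity hypothesis needed to invoke \cref{thm:main_inequality} in the first place.
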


\begin{proof}[Proof of \cref{cor:PittetSaloffCoste} given \cref{thm:main_inequality}]
If $G$ is nonamenable then the claim is vacuous, so we may assume that it is amenable and hence unimodular.
The inequality $\Lambda \gtrsim \Phi^2$ follows from Cheeger's inequality \eqref{eq:Cheeger}, so it suffices to prove that $\Lambda \lesssim \Phi^2$ under the diffusive assumption.
Suppose then that the diffusive assumption $\liminf_{n\to\infty}\P(d(X_0,X_n)\leq C n^{1/2})>0$ holds for some $C<\infty$ and let $c_1$ be the constant from \cref{thm:main_inequality}.
Applying \cref{thm:main_inequality} with $r=Cn^{1/2}$ and letting $c_2=c_1/C$, we obtain that
\[
\sup_n \max\left\{\ell\geq 0: n\geq \frac{\ell \log 2}{\Lambda(2^{\ell+1}\Phi^{-1}(c_2n^{-1/2}))}\right\} < \infty,
\]
or in other words that there exists $\ell_0<\infty$ such that
\[
\frac{\ell_0 \log 2}{\Lambda(2^{\ell_0+1}\Phi^{-1}(c_2n^{-1/2}))} > n
\]
for every $n\geq 1$. Rearranging this inequality yields that there exist positive constants $C_2$ and $C_3$ such that
\begin{equation}
\label{eq:Cheeger_rearrangement}
\Lambda(C_2 \Phi^{-1}(\eps)) \leq C_3 \eps^2 \qquad \text{ and hence that } \qquad C_2 \Phi^{-1}(\eps) \leq \Lambda^{-1}(C_3 \eps^2)
\end{equation}
for every $0<\eps\leq 1$ (values of $\eps$ not of the form $c_2n^{-1/2}$ can be dealt with by picking the minimal $n$ such that $c_2n^{-1/2}\leq \eps$ and using that $\Lambda$ and $\Phi^{-1}$ are decreasing and increasing respectively). We claim that this implies that the inequality
\[
\Lambda (C_2 n) \leq C_3 \Phi(n)^2
\]
holds for all $n\geq 1$. Suppose that this is not the case, let $n$ be such that $\Lambda (C_2 n) > C_3 \Phi(n)^2$, and let $\eps=\Phi(n)<1$. We have by definition that $\Phi^{-1}(\eps)\leq n$ and $\Lambda^{-1}(C_3 \eps^2)> C_2n$, contradicting \eqref{eq:Cheeger_rearrangement}. This completes the proof.
\end{proof}

\begin{remark}
 If \Cref{prob:PittetSaloffCoste} has a negative answer, the maximal exponent $\alpha$ for which $\Lambda \lesssim \Phi^\alpha$ would become an interesting feature of a group to study. The proof of \cref{cor:PittetSaloffCoste} shows that this exponent satisfies $\alpha \geq 1/\beta$ where $\beta$ is infimal such that $\liminf_{n\to\infty}\P(d(X_0,X_n)\leq C n^{\beta})>0$ for some $C<\infty$.  In particular, if there exist amenable groups in which $\Lambda \simeq \Phi$ then the random walks on these groups must have positive speed. More generally, the proof of \cref{cor:PittetSaloffCoste} shows more generally that if $d(X_0,X_n)$ is of order at most $f(n)$ with good probability for some increasing function $f$ then $\Lambda \lesssim f^{-1}(\Phi)$.
\end{remark}

\noindent \textbf{Occupation measures.} Our next application of \cref{thm:main_inequality} concerns a natural strengthening of the Lee-Peres theorem \cite{MR3127886} on diffusive lower bounds. While the methods of \cite{MR3127886} establish that $\E d(X_0,X_n) = \Omega(n^{1/2})$, they give only rather weak bounds on the probability that $d(X_0,X_n)$ is much smaller than $n^{1/2}$: The original Lee-Peres paper established the Ces\`aro estimate 
\[\frac{1}{n}\sum_{i=1}^n \P\bigl(d(X_0,X_i) \leq r\bigr) =O\left(\frac{r}{\sqrt{n}}\right),\]
which was improved to the pointwise estimate
\begin{equation}
\label{eq:martingale_small_ball} \P\bigl(d(X_0,X_n) \leq r\bigr) =O\left(\frac{r}{\sqrt{n}}\right)
\end{equation}
using martingale small-ball estimates in \cite{lee2016gaussian}. While these estimates are sharp for the simple random walk on $\Z$, they are presumably very far from the truth for groups of superpolynomial growth: For groups of $d$-dimensional polynomial growth the true probability is of order $(r/\sqrt{n})^d$ \cite{HebSaCo93}, so that a $(r/\sqrt{n})^{\omega(1)}$ bound should presumably be possible for groups of superpolynomial growth. 

\medskip

Our lack of understanding of this matter was highlighted in the work of Lyons, Peres, Sun, and Zheng \cite{lyons2020occupation} (see also Zheng's ICM proceedings \cite{zheng2022asymptotic}), who made the following conjecture on the expected \emph{occupation measure} of a ball:

\begin{conjecture}
\label{conj:occupation} Let $\Gamma$ be a finitely generated group and let $\mu$ be a symmetric measure on $\Gamma$ whose support generates $\Gamma$. Let $Z_1,Z_2,\ldots$ are independent, identically distributed random variables each with law $\mu$ and let $X$ denote the random walk $X_n = Z_1 Z_2\cdots Z_n$. If $X$ is transient then
\[
\E \left[\sum_{k=0}^\infty \mathbbm{1}(d(X_0,X_n)\leq r) \right] = O(r^2) \qquad \text{as $r\to\infty$.}
\]
\end{conjecture}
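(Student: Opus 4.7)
The plan is to deduce the bound by summing the small-ball estimate from \cref{thm:main_inequality} over all $k$, under the additional assumption that the spectral profile $\Lambda$ is slowly varying. Fix a large $r$ and set $m := \Phi^{-1}(c/r)$, so that $\Phi(m) \leq c/r$ and Cheeger's inequality \eqref{eq:Cheeger} yields $\Lambda(m) \geq \tfrac12 \Phi(m)^2 \gtrsim 1/r^2$. Introducing the shorthand
\[
\ell^*(k) := \max\left\{\ell \geq 0 : \frac{\ell \log 2}{\Lambda(2^{\ell+1} m)} \leq k\right\},
\]
\cref{thm:main_inequality} reads $\P(d(X_0,X_k) \leq r) \leq 2\cdot 2^{-\ell^*(k)/2}$, so the task reduces to showing $\sum_{k \geq 0} 2^{-\ell^*(k)/2} = O(r^2)$.

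I would then perform a layer-cake decomposition indexed by $L = \ell^*(k)$. Setting $k_L := L\log 2/\Lambda(2^{L+1}m)$, the set $\{k : \ell^*(k) = L\}$ is an integer interval $[k_L, k_{L+1})$ whose length is at most $(L+1)\log 2/\Lambda(2^{L+2}m)$, so
\[
\sum_{k \geq 0} 2^{-\ell^*(k)/2} \;\lesssim\; \sum_{L \geq 0} \frac{(L+1)\,2^{-L/2}}{\Lambda(2^{L+2}m)}.
\]
(These intervals genuinely exhaust $\mathbb{N}$ because $\Lambda \leq 1$ forces $k_L \to \infty$.) A Potter-type bound for a slowly varying $\Lambda$ then supplies, for each $\eta>0$ and all sufficiently large $m$, a uniform inequality $\Lambda(2^{L+2}m) \geq C_\eta \, 2^{-\eta L}\Lambda(m)$ valid for all $L\geq 0$. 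Choosing any $\eta \in (0,1/2)$ renders the resulting series geometrically convergent and produces
\[
\sum_{k \geq 0} \P\bigl(d(X_0, X_k) \leq r\bigr) \;\lesssim\; \frac{1}{\Lambda(m)} \;\lesssim\; r^2.
\]

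The principal obstacle I anticipate lies precisely in removing the slow-variation hypothesis: without it the Potter bound fails, and the factor $1/\Lambda(2^{L+2}m)$ can grow too quickly in $L$ for the geometric series to close (while on ``tame'' examples such as $\Lambda(n)\simeq(\log n)^{-\alpha}$ the hypothesis is of course trivially satisfied). A secondary, more technical obstacle is that \cref{thm:main_inequality} is stated only for simple random walk on a unimodular transitive graph, whereas the conjecture allows an arbitrary symmetric generating step distribution on a finitely generated group; this is exactly what the extension \cref{thm:main_inequality_general} cited in the excerpt is designed to handle. Finally, the argument above does not cover groups of polynomial growth (for which $\Lambda \asymp n^{-2/d}$ is not slowly varying), but there the conjecture is already settled by the classical Gaussian heat-kernel estimates of Hebisch--Saloff-Coste; transience forces growth degree $d\geq 3$ by Varopoulos, so the remaining sum is $O(r^2)$ by direct calculation.
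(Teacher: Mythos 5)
The statement you were asked to prove is \cref{conj:occupation}, which is an \emph{open conjecture} in this paper: the author explicitly says that \cref{thm:main_inequality} gets ``frustratingly close'' to it but does not resolve it. What you have written is, in substance, a proof of the paper's \cref{cor:occupation_measure} (specialized to $\beta=2$), not of the conjecture itself. Your route is essentially the same as the paper's: apply the main inequality with $m=\Phi^{-1}(c/r)$, use Cheeger's inequality to get $\Lambda(m)\gtrsim r^{-2}$, use a Potter-type bound for the slowly varying profile to absorb the troublesome $2^{\ell}$ inside $\Lambda$ at the cost of a factor $2^{-\eta\ell}$, and then sum. The paper first extracts the pointwise bound $\P(d(X_0,X_k)\leq r)\leq C_p(r/k^{1/\beta})^p$ and sums that, whereas you sum the layer-cake over $L=\ell^*(k)$ directly; these are interchangeable. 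The gap you correctly flag --- that without slow variation of $\Lambda$ the factor $1/\Lambda(2^{L}m)$ may grow too fast for the series to converge --- is precisely the gap the paper also cannot close; it is the content of \cref{conj:slowly_varying}. So your proposal is an accurate reconstruction of the state of the art, but it is not a proof of the stated conjecture, and you should say so explicitly rather than presenting the regularity hypothesis as a removable technicality.

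Two smaller corrections. First, your disposal of the polynomial-growth case is wrong as stated: the conjecture allows an \emph{arbitrary} symmetric $\mu$ with generating support, and transience does not force growth degree $d\geq 3$ for such walks (a sufficiently heavy-tailed symmetric walk on $\Z$ is transient). The paper handles $d\in\{1,2\}$ differently, via $\E[\sum_k \mathbbm{1}(d(X_0,X_k)\leq r)]\leq \operatorname{Gr}(r)\sum_k\sup_{x,y}P^k(x,y)=O(\operatorname{Gr}(r))=O(r^2)$, where the sum converges by transience and symmetry; for $d\geq 3$ one instead uses the heat-kernel bound $P^n(x,y)\lesssim n^{-d/2}$ coming from \eqref{eq:Coulhon_Saloff_Coste}. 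Second, for general $\mu$ you must invoke \cref{thm:main_inequality_general}, and there the relevant profile is $\Lambda_Q$, not the spectral profile of the Cayley graph; the reduction $\Lambda_Q\gtrsim\Lambda\gtrsim\Phi^2$ requires $Q$ to charge every generator (or a decomposition $Q=\theta P+(1-\theta)Q'$), which is worth stating rather than leaving implicit.
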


\begin{remark}
The conjecture stated in \cite{lyons2020occupation,zheng2022asymptotic} does not include the hypothesis that the support of $\mu$ generates $\Gamma$. However, the conjecture is false without this assumption. Let $\Gamma$ be the lamplighter group $\Z_2 \wr \Z$, and consider the random walk which, at each step, picks a random non-negative integer $N$ with some fixed distribution $\nu$ then independently randomizes the states of the lamps on the interval $[-N,N]$; the $\Z$ coordinate of the lamplighter remains fixed throughout. If $N_1,N_2,\ldots$ denote the i.i.d.\ random integers used to generate the walk, then the distance to the origin in the lamplighter group at time $n$ is $O(\max\{N_i : 1\leq i\leq n\})$, and can therefore be made to grow in an essentially arbitrary way by picking the measure $\nu$ appropriately. On the other hand, conditional on the random sequence $N$, the probability that the walk is at the origin at time $n$ is $2^{-2\max\{N_i : 1\leq i\leq n\}-1}$, so that the walk is transient whenever $\max\{N_i : 1\leq i\leq n\}$ grows at least like $C \log n$ for an appropriately large constant $C$. Thus, walks whose support does not generate a finitely generated group can be much slower than diffusive while remaining transient.
\end{remark}

This conjecture is known to hold whenever $\Gamma$ has polynomial growth. Indeed, it is a consequence of Gromov's theorem \cite{MR623534} and the Bass-Guivarc'h formula \cite{bass72poly-growth,guivarch73poly-growth} that every group of polynomial growth has an \emph{integer} volume growth dimension $d$ such that $\operatorname{Gr}(r) \simeq r^d$. If $\Gamma$ has dimension at least three, \cref{conj:occupation} follows from the fact that every random walk on $\Gamma$ whose support generates $\Gamma$ satisfies the heat kernel bound $P^{n}(x,y)\lesssim n^{-d/2}$. (This bound follows from \eqref{eq:Coulhon_Saloff_Coste} together with the relationships between the isoperimetric profile and heat kernel decay.) Meanwhile, if $\Gamma$ has dimension $1$ or $2$ and the walk $X$ is transient then
\[
\E \left[\sum_{k=0}^\infty \mathbbm{1}(d(X_0,X_n)\leq r) \right] \leq \operatorname{Gr}(r) \sum_{k=0}^\infty \sup_{x,y} P^k(x,y) = O(\operatorname{Gr}(r)) = O(r^2),
\] 
where the sum in the middle expression converges by the assumption that the walk is transient and symmetric. Thus, \cref{conj:occupation} is open only for groups of superpolynomial growth.

In unpublished work of Peres and Zheng discussed in \cite[Section 4.2.2]{zheng2022asymptotic}, it is proven that if a group does not have \emph{Shalom's property} $H_\mathrm{FD}$ \cite{shalom2004harmonic} then the martingales arising from its equivariant harmonic embedding into Hilbert space have an ``asymptotic orthogonality'' property which allows one to improve the bound \eqref{eq:martingale_small_ball} to
\begin{equation}
\label{eq:martingale_small_ball} \P\bigl(d(X_0,X_n) \leq r\bigr) \leq C_p \left(\frac{r}{\sqrt{n}}\right)^p
\end{equation}
for any exponent $p<\infty$ via appropriate martingale small-ball estimates; this estimate is more than enough to prove \cref{conj:occupation}. (We will not define this property here since we do not use it.)
On the other hand, the best known result for an \emph{arbitrary} group of superpolynomial growth, proven in \cite{lyons2020occupation}, is that the simple random walk satisfies
\[\E \left[\sum_{k=0}^\infty \mathbbm{1}(d(X_0,X_k)\leq r) \right] = O(r^2\sqrt{\log \operatorname{Gr}(r)})=O(r^{5/2}).\]
It is also proven in the same paper that if $X$ is the simple random walk on a finitely generated group of superpolynomial growth then
\begin{equation}
\label{eq:WUSF}\E \left[\sum_{k=0}^\infty k \mathbbm{1}(d(X_0,X_k)\leq r) \right] = O(r^4(\log \operatorname{Gr}(r))^{3/2})=O(r^{11/2}),\end{equation}
which has interpretations in terms of the occupation measure of \emph{wired uniform spanning forest} components on the Cayley graph; they conjectured that in fact this quantity is always $O(r^4)$.

\medskip

As we will see, \cref{thm:main_inequality} gets us frustratingly close to a full resolution of \cref{conj:occupation}. Before stating exactly what we are able to prove, let us first state the following generalization of \cref{conj:occupation} that allows for random walks other than the simple random walk. 

\begin{theorem}
\label{thm:main_inequality_general}
Let $G=(V,E)$ be a connected, locally finite, transitive graph, let $\Gamma \subseteq \Aut(G)$ be a closed transitive, unimodular subgroup of $\Aut(G)$, and suppose that $Q:V\times V\to[0,1]$ is a stochastic matrix that is symmetric and $\Gamma$-diagonally invariant in the sense that $Q(\gamma x, \gamma y)=Q(x,y)$ for every $x,y\in V$ and $\gamma \in \Gamma$. Let $X=(X_k)_{k\geq 0}$ be the random walk on $V$ with transition matrix $Q$. 
The inequality
\begin{align*}
\P\left(d(X_0,X_k) \leq r\right) 
&\leq 2\exp\left[ - \frac{\log 2}{2}\max\left\{\ell \geq 0 : \frac{\ell \log 2}{\Lambda_Q(2^{\ell+1}\Phi^{-1}(c/r))} \leq k \right\} \right]
\end{align*}
holds for 
 every pair of integers $k,r \geq 1$, where $c=\min\{\Gamma e \cap E^\rightarrow_o : e\in E^\rightarrow\}/2\deg(o)$.
\end{theorem}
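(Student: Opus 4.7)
The plan is to prove \cref{thm:main_inequality_general} using the same three-ingredient recipe one would use for the simple-random-walk case \cref{thm:main_inequality}: Cauchy--Schwarz, a Coulhon--Saloff-Coste-type bound on balls, and a Morris--Peres-style spectral-profile iteration. The generalisation to an arbitrary symmetric, $\Gamma$-diagonally-invariant kernel $Q$ is essentially bookkeeping, with the simple random walk matrix replaced by $Q$ and $\Aut(G)$ replaced by $\Gamma$ throughout; the hypothesis that $\Gamma$ is transitive and unimodular is what makes the supporting lemmas continue to work.

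First I would apply Cauchy--Schwarz using the symmetry of $Q$: since $Q^k(o,y) = Q^k(y,o)$, one has
\[
\P(d(X_0,X_k) \leq r)^2 = \Bigl(\sum_{y \in B(o,r)} Q^k(o,y) \Bigr)^{\!2} \leq |B(o,r)| \cdot Q^{2k}(o,o).
\]
Second, I would prove a Coulhon--Saloff-Coste-style bound $|B(o,r)| \leq 2 \Phi_Q^{-1}(c/r)$ for the specific constant $c$ in the statement. This adapts \eqref{eq:Coulhon_Saloff_Coste} to general $\Gamma$-invariant $Q$: the $Q$-flow out of the ball of radius $r$ can be decomposed by $\Gamma$-orbits of oriented edges, and the mass-transport principle for the unimodular transitive group $\Gamma$ bounds this flow by $(c/r)|B(o,r)|$, whence the estimate via the definition of $\Phi_Q^{-1}$. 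The specific formula for $c$ emerges as the minimum over orbits of the orbit density at $o$.

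Third, I would run a Morris--Peres-style iteration to show
\[
Q^{2k}(o,o) \leq \frac{C \cdot 2^{-\ell}}{\Phi_Q^{-1}(c/r)} \quad \text{whenever } k \geq \frac{\ell \log 2}{\Lambda_Q(2^{\ell+1} \Phi_Q^{-1}(c/r))}.
\]
The inductive step is the standard fact that if $\|Q^t \delta_o\|_2^2 \leq 1/m$, then $\|Q^{t+s}\delta_o\|_2^2 \leq 1/(2m)$ for $s \geq \log 2/\Lambda_Q(2m)$, which follows from the definition of $\Lambda_Q$ applied to a suitable truncation of $Q^t \delta_o$. Iterated $\ell$ times starting from the scale $m = \Phi_Q^{-1}(c/r)$, with $\Lambda_Q$ decreasing so the last time-step dominates, the total time required is at most $\ell \log 2/\Lambda_Q(2^{\ell+1}\Phi_Q^{-1}(c/r))$. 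Combining the three ingredients gives $\P(d(X_0,X_k) \leq r) \leq 2\sqrt{2} \cdot 2^{-\ell/2}$, matching the claim up to constants that can be tightened by a careful accounting of the $\log 2/2$ in the exponent.

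The main obstacle is the third step, specifically making the iteration ``start from'' the scale $m_0 = \Phi_Q^{-1}(c/r)$ rather than from the initial $m_0 = 1$ that the bare iteration from $\delta_o$ provides; the naive iteration would accumulate an extra $\int_1^{\Phi_Q^{-1}(c/r)} du/(u\Lambda_Q(u))$ summand in the required time, weakening the bound. A clean way around this is to note that the target bound exceeds $1$ unless $2\Phi_Q^{-1}(c/r) \cdot Q^{2k}(o,o) < 4$, so one may assume the process has already descended to below $2/\Phi_Q^{-1}(c/r)$ by some time $t_0 \leq k$ and only iterate the doubling statement on $[t_0,k]$. A slicker alternative, which I suspect is what is actually done, is to iterate on $\|Q^k \mathbf 1_{B(o,r)}\|_2$ directly: its initial $L^2$ norm is $\sqrt{|B(o,r)|}$, each doubling-time-step contracts it by a factor of $\sqrt{2}$ at the appropriate spectral scale, and the factor $2^{-\ell/2}$ in the final bound appears in one clean piece without an isolated ball-size step. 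Either route gives Theorem~\ref{thm:main_inequality_general}.
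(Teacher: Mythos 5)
There is a genuine gap: the overall strategy cannot yield the theorem. Your first and third steps combine into $\P(d(X_0,X_k)\leq r)^2 \leq |B(o,r)|\, Q^{2k}(o,o)$, but this product is not small in the regimes where the theorem is nontrivial and sharp. On the lamplighter group $\Z_2\wr\Z$ with $r=k^{0.4}$ one has $|B(o,r)|\simeq e^{cr}=e^{ck^{0.4}}$ while $Q^{2k}(o,o)\simeq e^{-k^{1/3}}$, so the right-hand side tends to infinity, yet the theorem gives the bound $\exp[-ck^{1/5}]$ (which, per the remark after \cref{cor:small_ball}, is sharp). Equivalently, your claimed intermediate estimate $Q^{2k}(o,o)\leq C2^{-\ell}/\Phi^{-1}(c/r)$ for $k\geq \ell\log 2/\Lambda_Q(2^{\ell+1}\Phi^{-1}(c/r))$ is already false for $\ell=1$ on the lamplighter group, where at $r\simeq\sqrt{k}$ it would assert $P^{2k}(o,o)\lesssim e^{-c\sqrt{k}}$ while the truth is $e^{-\Theta(k^{1/3})}$: the spectral-profile iteration genuinely must pay the full $\int_1^m \mathrm{d}u/(u\Lambda_Q(u))$ to descend from $\delta_o$ to scale $m$, and no bookkeeping avoids this because the inequality you want is simply not true. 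Your ``slicker alternative'' of iterating the $L^2$ contraction on $\mathbbm{1}_{B(o,r)}$ is much closer to what the paper actually does (it iterates on $\mathbbm{1}_{W_n}$ for a near-optimal isoperimetric set $W_n$ with $|W_n|\leq n=\Phi^{-1}(c/r)$, via Hermon's bound $\chi_Q(n,\ell)\leq \ell\log 2/\Lambda_Q(n2^{\ell+1})$), but it still does not close: the contraction controls $\P_{W}(X_k\in W)$ for a walk started \emph{uniformly} in $W$, which is not the displacement probability of a walk started at a fixed point, and passing between the two by Cauchy--Schwarz reintroduces a fatal $\sqrt{|B(o,r)|}$ factor.

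The missing idea is an invariant pseudometric built by Haar-averaging. Fix $n$ and a set $W_n$ with $|W_n|\leq n$ nearly attaining $\Phi(n)$, and set $d_n(x,y)=|W_n|-\operatorname{Haar}\{\gamma\in\Gamma: x,y\in\gamma W_n\}$. Unimodularity and $\Gamma$-invariance of $Q$ give exactly the two facts you need: (i) $\E[|W_n|-d_n(X_0,X_k)]=|W_n|\,\P_{W_n}(X_k\in W_n)$, which converts the uniform-start return probability into a statement about the displacement of the walk from a fixed starting point; and (ii) the Lipschitz constant of $d_n$ with respect to the graph metric is at most a constant times $\Phi(n)|W_n|$, which is how $\Phi^{-1}(c/r)$ enters (note the theorem's $\Phi$ is the isoperimetric profile of $G$ itself, not of $Q$ as in your second step). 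Markov's inequality applied to (i), combined with (ii) and Hermon's proposition, then yields the stated bound with $n=\Phi^{-1}(c/r)$. Without this averaging device (or the equivalent $L^1$-embedding/measured-walls viewpoint) the argument does not go through, and no route through $Q^{2k}(o,o)$ and the ball volume can.
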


\begin{remark}
In this theorem, $\Lambda_Q$ denotes the spectral profile with respect to $Q$ but $\Phi$ still denotes the isoperimetric profile associated to the simple random walk on $G$. If $Q(x,y)>0$ for every pair of neighbouring vertices in $G$ then we can write $Q=\theta P +(1-\theta)Q'$ for some stochastic matrix $Q'$ and $\theta\in (0,1)$, where $P$ denotes the simple random walk transition matrix on $G$, and it follows that $\Lambda_Q \geq \theta \Lambda$. When $Q$ describes a walk that takes large jumps on $G$, the spectral profile $\Lambda_Q$ may be much larger than the spectral profile $\Lambda$ of $G$ \cite{saloff2016random,saloff2015random}.
\end{remark}

Let us now explain the relevance of \cref{thm:main_inequality,thm:main_inequality_general} to \cref{conj:occupation}.
Morally, \cref{thm:main_inequality_general} should lead to diffusive lower bounds since, by Cheeger's inequality, 
\begin{equation}
\label{eq:dubious}
\Lambda_Q(C \Phi^{-1}(c/r)) \geq c \Lambda(C \Phi^{-1}(c/r)) \geq \frac{c}{2} \Phi(C \Phi^{-1}(c/r))^2 \text{``$\geq$''} c r^{-2},\end{equation}
where $c$ and $C$ denote constants that can change from one expression to the next. Moreover, we know that if the graph has superpolynomial growth then $\Phi$ decays subpolynomially by the Coulhon--Saloff-Coste bound \eqref{eq:Coulhon_Saloff_Coste}, so that $\Lambda(2^\ell \Phi^{-1}(c_1/r))$ ``should'' be of the same order as $\Lambda(\Phi^{-1}(c_1/r))$ when $r$ is large and $\ell$ is not too large. Unfortunately, we do not currently know how to fully justify the previous sentence. In fact, we have not even been able to prove the doubling property $\Phi(2n)\geq C \Phi(n)$ needed to justify the final inequality of \eqref{eq:dubious} without making some assumptions on the group. 

\medskip

The following corollary verifies \cref{conj:occupation} under a mild regularity assumption on the spectral and isoperimetric profiles.
  In fact we prove a stronger statement bounding higher moments of occupation measures, the case $p=2$ of which also establishes the sharp form of \eqref{eq:WUSF} subject to the same caveats.  Here, we recall that a positive real-valued function is said to be \textbf{slowly varying} if $\lim_{t\to\infty} \sup_{\lambda \in [1,2]} f(\lambda t)/f(t)=\lim_{t\to\infty} \inf_{\lambda \in [1,2]} f(\lambda t)/f(t)=1$ and say that a function is \textbf{roughly slowly varying} if it is $\simeq$-equivalent to a slowly varying function.

\begin{corollary}
\label{cor:occupation_measure}
Let $G=(V,E)$ be a connected, locally finite, transitive graph, let $\Gamma \subseteq \Aut(G)$ be a closed, transitive, unimodular subgroup of $\Aut(G)$, and suppose that $Q:V\times V\to[0,1]$ is a stochastic matrix that is symmetric and $\Gamma$-diagonally invariant in the sense that $Q(\gamma x, \gamma y)=Q(x,y)$ for every $x,y\in V$ and $\gamma \in \Gamma$. 
   If there exists $\beta>0$ and a roughly slowly varying function $\tilde \Lambda$ such that $\Phi^\beta \lesssim \tilde \Lambda \lesssim \Lambda_Q$, then for each $p\in [1,\infty)$ there exists a constant $C_p<\infty$ such that
  \[
\P\bigl(d(X_0,X_k) \leq r\bigr) \leq C_p \left(\frac{r}{k^{1/\beta}}\right)^p
  \]
  for every $k,r\geq 1$. As a consequence,
  for each $p\in [1,\infty)$ there exists a constant $C_p'<\infty$  such that \[
   \sum_{k=0}^\infty (k+1)^{p-1} \P(d(X_0,X_k)\leq r) \leq C_p' r^{\beta p}\]
  for every $r\geq 1$.
\end{corollary}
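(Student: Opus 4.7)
The plan is to deduce the pointwise small-ball estimate from \cref{thm:main_inequality_general} by using a Potter-type bound on the slowly varying function $\tilde\Lambda$, and then to obtain the occupation-measure estimate by a split summation.

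For the pointwise bound, fix $k, r \geq 1$, let $c$ be the constant appearing in \cref{thm:main_inequality_general}, and set $n_0 = \Phi^{-1}(c/r)$. A routine manipulation of the $\simeq$-relations uses $\Phi^\beta \lesssim \tilde\Lambda$ to give $\tilde\Lambda(n_0) \gtrsim r^{-\beta}$ (possibly after replacing $n_0$ by a constant multiple). Since $\tilde\Lambda$ is roughly slowly varying, Potter's bound says that for every $\delta > 0$ there is a constant $C_\delta < \infty$ such that
\[
\tilde\Lambda(2^{\ell+1} n_0) \;\geq\; C_\delta^{-1}\, 2^{-\delta \ell}\, \tilde\Lambda(n_0) \;\gtrsim_\delta\; 2^{-\delta \ell} r^{-\beta}
\]
for every $\ell \geq 0$, at least once $n_0$ exceeds some $\delta$-dependent threshold. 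Combining with $\tilde\Lambda \lesssim \Lambda_Q$ yields $\Lambda_Q(2^{\ell+1} n_0) \gtrsim_\delta 2^{-\delta \ell} r^{-\beta}$, so the condition $\ell(\log 2)/\Lambda_Q(2^{\ell+1}n_0) \leq k$ appearing in \cref{thm:main_inequality_general} is implied by $\ell \cdot 2^{\delta \ell} \lesssim_\delta k/r^\beta$. Solving this in $\ell$ shows that the maximizer $\ell^\star$ from the theorem satisfies $\ell^\star \geq \delta^{-1}\log_2(k/r^\beta) - O_\delta(1)$, and hence
\[
\P\bigl(d(X_0,X_k) \leq r\bigr) \;\leq\; 2\cdot 2^{-\ell^\star/2} \;\lesssim_\delta\; \bigl(r^\beta/k\bigr)^{1/(2\delta)}.
\]
Choosing $\delta = \beta/(2p)$ gives exactly $\P(d(X_0,X_k) \leq r) \leq C_p (r/k^{1/\beta})^p$. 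Pairs $(k,r)$ for which $n_0$ lies below the validity threshold of Potter's bound form a bounded region once suitable parameters are fixed and can be absorbed into $C_p$.

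For the occupation-measure bound, split the sum at $k = \lfloor r^\beta \rfloor$. The contribution from $k \leq r^\beta$ is bounded trivially via $\P \leq 1$, giving $\sum_{k \leq r^\beta}(k+1)^{p-1} = O(r^{\beta p})$ since $p \geq 1$. For $k > r^\beta$, apply the pointwise estimate with some exponent $q > \beta p$ (for instance $q = \beta p + 1$) to get
\[
\sum_{k > r^\beta}(k+1)^{p-1}\, C_q\, (r/k^{1/\beta})^q \;\lesssim\; r^q \sum_{k > r^\beta} k^{p-1-q/\beta} \;\lesssim\; r^q \cdot (r^\beta)^{p - q/\beta} \;=\; r^{\beta p},
\]
which combines with the previous bound to yield $\sum_k (k+1)^{p-1}\P(d(X_0,X_k)\leq r) \lesssim_p r^{\beta p}$.

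The main obstacle is the careful quantitative analysis of the slowly varying function $\tilde\Lambda$: one must carefully track the multiplicative constants when passing from the crude $\simeq$-hypotheses to explicit inequalities, and separately handle the small-$n_0$ regime in which Potter's bound does not yet apply. Both are standard in regular-variation theory but require some patience with bookkeeping.
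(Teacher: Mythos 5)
Your proof is correct and follows essentially the same route as the paper's: apply \cref{thm:main_inequality_general}, use a Potter-type lower bound for the roughly slowly varying $\tilde\Lambda$ to bound $\Lambda_Q(2^{\ell+1}\Phi^{-1}(c/r))\gtrsim 2^{-\delta\ell}r^{-\beta}$, optimize over $\ell$, and finish the occupation bound by splitting the sum at $k\approx r^\beta$. The only cosmetic difference is that the paper first replaces $\Phi^{-1}(c/r)$ by $C\tilde\Lambda^{-1}(c_1/r^\beta)$ before invoking slow variation, whereas you work directly with $n_0=\Phi^{-1}(c/r)$; note also that the exceptional set where Potter's bound fails is the strip $\{1\leq r\leq r_0,\ k\geq 1\}$, which is unbounded in $k$ and is handled by monotonicity of $\P(d(X_0,X_k)\leq r)$ in $r$ rather than by boundedness of the region.
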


Note that the quantity $\sum_{k=0}^\infty (k+1)^{p-1} \P(d(X_0,X_k)\leq r)$ is an upper bound on the $p$th moment of the occupation measure of the ball of radius $r$ up to a constant of order $p$.
By Cheeger's inequality \eqref{eq:Cheeger}, if 
 $Q(x,y)>0$ whenever $x$ and $y$ are adjacent in $G$ (as is certainly the case for the simple random walk) and 
any of the functions $\Phi$, $\Lambda$, or $\Lambda_Q$ are slowly varying then the conclusions of \cref{cor:occupation_measure} hold with $\beta=2$. Even if the relationship $\Lambda \simeq \Phi^2$ always holds for simple random walk on groups, other values of $\beta$ are certainly relevant for heavy-tailed random walks \cite{saloff2016random,saloff2015random}.

\begin{proof}[Proof of \cref{cor:occupation_measure}]
We may assume that $G$ is amenable, the claim following trivially from Kesten's theorem otherwise, so that $\Gamma$ is unimodular.
Suppose that there exists a slowly varying function $\tilde \Lambda$ with $\Phi^\beta \lesssim \tilde \Lambda \lesssim \Lambda_Q$. As in the proof of \cref{cor:PittetSaloffCoste}, this implies that there exists a constant $C$ such that $\Phi^{-1}(\eps) \leq C \tilde \Lambda^{-1}(\eps^\beta/C)$ for every $0<\eps\leq 1$, and we deduce from \cref{thm:main_inequality_general} that
\[
\P\left(d(X_0,X_k) \leq r\right) 
\leq 2\exp\left[ - \frac{\log 2}{2}\max\left\{\ell \geq 0 :  \frac{\ell \log 2}{\tilde \Lambda(C_2 2^{\ell}\tilde \Lambda^{-1}(c_1/r^\beta))} \leq k \right\} \right].
\]
Since $\tilde \Lambda$ is slowly varying, for each $\eps>0$ there exists a constant $r_0$ such that if $r\geq r_0$ then
\[
\tilde \Lambda\Bigl(C_2 2^{\ell}\tilde \Lambda^{-1}(c_1/r^\beta)\Bigr) \geq 2^{-\eps \ell} \tilde \Lambda\Bigl(\tilde \Lambda^{-1}(c_1/r^\beta)\Bigr) = 2^{-\eps\ell}\cdot\frac{c_1}{r^\beta}.
\]
Absorbing the linear term $\ell$ into the exponential $2^{-\eps \ell}$ at the cost of an $\eps$-dependent constant, it follows that for each $\eps>0$ there exists a constant $C_2(\eps)$ such that
\[
\P\left(d(X_0,X_k) \leq r\right) 
\leq 2\exp\left[ - \frac{\log 2}{2}\max\left\{\ell \geq 0 : C_2(\eps)2^{2\eps \ell} r^\beta \leq k\right\} \right].
\]
Since $\eps>0$ was arbitrary, this is easily seen to imply that for each exponent $p\geq 1$ there exists a constant $C_p$ such that
\[
\P\left(d(X_0,X_k) \leq r\right) \leq C_p\left(\frac{r}{k^{1/\beta}}\right)^p
\]
as claimed. The second claimed inequality follows from this one by an elementary calculation.
\end{proof}

The regularity hypothesis used in this corollary holds in particular if either $\Lambda$ or $\Phi$ is roughly slowly varying. We conjecture that this hypothesis always holds for groups of superpolynomial growth:

\begin{conjecture}
\label{conj:slowly_varying}
Let $G$ be a connected, locally finite, transitive graph. If $G$ has superpolynomial growth then its spectral and isoperimetric profiles $\Phi$ and $\Lambda$ are both roughly slowly varying.
\end{conjecture}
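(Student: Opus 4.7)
The nonamenable case is trivial since both $\Phi$ and $\Lambda$ are then bounded away from zero, so we focus on amenable transitive graphs of superpolynomial growth. The plan is to first establish that the inverse growth function $\operatorname{Gr}^{-1}$ is roughly slowly varying, then transfer this regularity first to $\Phi$ via the sandwich \eqref{eq:Coulhon_Saloff_Coste}--\eqref{eq:Growth_Bounds_Isoperimetry}, and finally to $\Lambda$ via Cheeger's inequality together with a direct spectral argument.

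\emph{Step 1: regularity of $\operatorname{Gr}^{-1}$.} For Cayley graphs $\operatorname{Gr}$ is submultiplicative and hence $\log \operatorname{Gr}$ is subadditive; an analogous fact holds for transitive graphs. Superpolynomial growth is equivalent to $\log \operatorname{Gr}(r)/\log r \to \infty$, from which it follows immediately that $\operatorname{Gr}^{-1}(n) = n^{o(1)}$. The critical regularity I would try to prove is that $\operatorname{Gr}^{-1}$ is itself roughly slowly varying, i.e., $\operatorname{Gr}^{-1}(2n) \asymp \operatorname{Gr}^{-1}(n)$. For groups of exponential growth this is immediate since $\operatorname{Gr}^{-1}(n) \asymp \log n$. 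The delicate case is intermediate growth, where one would need to rule out large fluctuations of $\operatorname{Gr}$ at intermediate scales, perhaps by combining submultiplicativity with Gromov-style structural input or with Breuillard--Green--Tao approximate-group machinery to compare $\operatorname{Gr}$ at nearby scales.

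\emph{Step 2: $\Phi$ is roughly slowly varying.} Given Step 1, the Coulhon--Saloff-Coste inequality $\Phi(n) \geq 1/(2\deg(o)\operatorname{Gr}^{-1}(2n))$ together with the companion upper bound $\Phi(n) \leq C/(\operatorname{Gr}^{-1}(n) - \operatorname{Gr}^{-1}(n/2))$ pinches $\Phi$ between two roughly slowly varying functions. Monotonicity of $\Phi$ then forces $\Phi$ to be roughly slowly varying as well.

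\emph{Step 3: $\Lambda$ is roughly slowly varying.} Cheeger's inequality gives $\frac{1}{2}\Phi^2 \leq \Lambda \leq \Phi$, so Step 2 shows that $\Lambda$ is sandwiched between two roughly slowly varying functions, but this alone does not imply that $\Lambda$ itself is roughly slowly varying. To finish, I would try to prove directly a doubling bound $\Lambda(2n) \geq c\Lambda(n)$: given a near-extremal set $\Omega$ with $|\Omega| \leq 2n$ and principal eigenfunction $f$ of the killed walk on $\Omega$, consider a truncation $g_t := (|f|-\sqrt{t})_+$ at a threshold $t$ chosen so that $A_t := \{x : |f(x)|^2 \geq t\}$ has size at most $n$ while $\|g_t\|_2$ remains comparable to $\|f\|_2$. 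Standard Dirichlet-form calculations show that $\langle (I-P)g_t, g_t\rangle$ is controlled by $\langle (I-P)f, f\rangle$, yielding a test function supported on a set of size at most $n$ with comparable Rayleigh quotient, so $\Lambda(n) \leq C\Lambda(2n)$; combined with Step 2 this gives the required regularity. The main obstacle is clearly Step 1: controlling the shape of the growth function at intermediate scales for groups of intermediate growth is a longstanding difficulty, and it is plausible that this conjecture is of comparable depth to open questions about the precise asymptotics of Grigorchuk-type growth functions.
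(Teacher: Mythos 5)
This statement is \Cref{conj:slowly_varying}, which the paper presents as an open \emph{conjecture}: no proof is given, and the paper only offers partial evidence (it holds in all examples where $\Phi,\Lambda$ are known explicitly; it holds for supermultilateral groups via \cref{prop:multilateral}; and $\operatorname{Gr}^{-1}$ is genuinely slowly varying by Breuillard--Green--Tao and Tessera--Tointon). So there is nothing in the paper to compare your argument against, and a correct proof would be a substantial new theorem. Your proposal, however, has genuine gaps in each of its three steps. You have also misplaced the difficulty: your Step 1 is the one part that is essentially known (the paper records that $\operatorname{Gr}(2n)/\operatorname{Gr}(n)\to\infty$ for transitive graphs of superpolynomial growth, whence $\operatorname{Gr}^{-1}$ is slowly varying), whereas Steps 2 and 3 are where the conjecture actually lives.

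Step 2 fails because \eqref{eq:Coulhon_Saloff_Coste} and \eqref{eq:Growth_Bounds_Isoperimetry} do not pinch $\Phi$ outside the polynomial-growth regime: the upper bound has the \emph{increment} $\operatorname{Gr}^{-1}(n)-\operatorname{Gr}^{-1}(n/2)$ in the denominator, and precisely because $\operatorname{Gr}^{-1}$ is slowly varying this increment is $o(\operatorname{Gr}^{-1}(n))$ --- for exponential growth it is $O(1)$, making the upper bound vacuous. The paper stresses this point: among groups of exponential growth, $\Phi$ can be anything between non-decaying (free groups) and $1/\log n$ (lamplighter), and by Brieussel--Zheng essentially anything in between, so $\Phi$ is simply not determined by $\operatorname{Gr}$ and no sandwich argument from the growth function alone can succeed. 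Step 3 fails quantitatively: for the extremal (near-constant, spread-out) functions relevant to the spectral profile, every threshold $t$ with $|\{x:f(x)^2>t\}|\leq n$ can annihilate all of the $L^2$ mass of an $f$ normalized on a set of size $2n$ (take $f\equiv (2n)^{-1/2}$ on $2n$ points), so the truncation does not produce a test function with comparable norm. Moreover, even granting the doubling bound $\Lambda(2n)\geq c\Lambda(n)$, this is far weaker than rough slow variation --- it is satisfied by $n^{-\alpha}$ --- and combined with the Cheeger sandwich $\frac{1}{2}\Phi^2\leq\Lambda\leq\Phi$ (whose two sides are not $\simeq$-equivalent) it still does not force $\Lambda$ to be roughly slowly varying. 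If you want unconditional partial results in the spirit of the paper, the route actually taken there is algebraic rather than analytic: for supermultilateral groups one compares $\Lambda(n)$ with $\Lambda(n^{1/m})$ via the Coulhon--Grigor'yan--Levin product formula, which is a comparison across the multiplicative scale $n\mapsto n^{1/m}$ rather than $n\mapsto n/2$, and that is exactly the strength needed for \cref{lem:log_doubling}.
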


We would actually make the stronger conjecture that $\Phi$ and $\Lambda$ are slowly varying rather than merely roughly slowly varying, but the distinction is not important for our applications.

\medskip

Let us now give some brief justification for this conjecture. First, it is true in all examples where $\Phi$ and $\Lambda$ have been computed explicitly: see \cite[Table 1]{bendikov2012spectral} for an overview. Second, it is a consequence of the landmark work of Breuillard, Green, and Tao \cite{breuillard2011structure} and its generalizations to transitive graphs by Tessera and Tointon \cite{MR4253426} that if $G$ is a connected, transitive, locally finite graph of superpolynomial volume growth then $\operatorname{Gr}(2n)/\operatorname{Gr}(n)\to \infty$ as $n\to\infty$ and hence that the inverse growth function $\operatorname{Gr}^{-1}$ is slowly varying. Thus, $\Phi$ is roughly slowly varying whenever the Coulhon--Saloff-Coste inequality \eqref{eq:Coulhon_Saloff_Coste} is sharp in the sense that $\Phi \simeq 1/\operatorname{Gr}^{-1}$.  Let us also mention that all the groups constructed in \cite{brieussel2015speed} have isoperimetric and spectral profiles that are roughly slowly varying, being of the form $f(\log x)/\log x$ and $(f(\log x)/\log x)^2$ for $f$ an increasing function with $f(x)/x$ monotone.  In \cref{sec:multilateral} we prove that the conjecture also holds for a number of classical examples where the profiles $\Phi$ and $\Lambda$ are \emph{not} known explicitly, including the first Grigorchuk group and Thompson's group $F$.


\medskip

\noindent
\textbf{Sharp small-ball estimates for groups of stretched exponential growth.}
If one assumes not only that $\Lambda$ is slowly varying but that $\Lambda(2^n)$ is \emph{doubling}, meaning that there exists a constant $c>0$ such that $\Lambda(2^{2n})\geq c\Lambda(2^n)$ for every $n\geq 1$, then one can extract the following very strong small-ball estimate from \cref{thm:main_inequality}.  This assumption holds if e.g.\ $\Lambda(n)\simeq (\log n)^{-\alpha}$ for some $\alpha>0$, as is the case in many examples of groups of superpolynomial growth.

\begin{corollary}
\label{cor:small_ball}
Let $G=(V,E)$ be a connected, locally finite, transitive graph, let $\Gamma \subseteq \Aut(G)$ be a closed transitive, unimodular subgroup of $\Aut(G)$, and suppose that $Q:V\times V\to[0,1]$ is a stochastic matrix that is symmetric and $\Gamma$-diagonally invariant in the sense that $Q(\gamma x, \gamma y)=Q(x,y)$ for every $x,y\in V$ and $\gamma \in \Gamma$. Let $X=(X_k)_{k\geq 0}$ be the random walk on $V$ with transition matrix $Q$. 
Let $\Psi(n)= n / \Lambda_Q(2^n)$. If $\Lambda_Q(2^n)$ is doubling and $\beta >0$ is such that $\Lambda_Q \gtrsim \Phi^\beta$ then there exists a positive constant $c$ such that
\[
\P(d(X_0,X_k)\leq r) \leq \exp\left[ - c \min\left\{\frac{k}{r^\beta}, \Psi^{-1}(ck)\right\} \right]
\]
for every $k,r\geq 1$. 
\end{corollary}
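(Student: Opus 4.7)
The plan is to extract the bound directly from \cref{thm:main_inequality_general} by producing a good lower bound on the maximal $\ell$ appearing in its exponent. Writing $m := \Phi^{-1}(c_0/r)$ (with $c_0$ the constant from that theorem), $L := \log_2 m$, and $f(n) := \Lambda_Q(2^n)$, the theorem gives
\[
\P\bigl(d(X_0,X_k)\leq r\bigr) \leq 2\exp\!\Bigl[-\tfrac{\log 2}{2}\,\ell^{*}\Bigr], \quad \ell^{*} := \max\!\bigl\{\ell\geq 0 : \ell\log 2 \leq k\, f(\ell+1+L)\bigr\}.
\]
It therefore suffices to prove $\ell^{*}\gtrsim\min\{k/r^{\beta},\,\Psi^{-1}(c_2 k)\}$ for some constant $c_2>0$; the prefactor $2$ and the factor $(\log 2)/2$ are then absorbed into the final constant $c$ in the statement, using that the claim is trivial when $c\min\{k/r^\beta,\Psi^{-1}(ck)\}$ is so small that $e^{-c\min\{\cdots\}}$ exceeds a fixed multiple of $2$.

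The key step is a uniform lower bound on $f(\ell+1+L)$. The hypothesis $\Lambda_Q\gtrsim\Phi^{\beta}$ gives $f(L)=\Lambda_Q(m)\gtrsim\Phi(m)^\beta\gtrsim r^{-\beta}$. On the other hand, since $f$ is decreasing, the doubling hypothesis $f(2n)\geq c_1 f(n)$ holds, and $a+b\leq 2\max(a,b)$, we obtain the convolution-type inequality
\[
f(a+b)\;\geq\; f\bigl(2\max(a,b)\bigr)\;\geq\; c_1 f\bigl(\max(a,b)\bigr)\;=\;c_1 \min\bigl(f(a),f(b)\bigr)
\]
for all positive $a,b$. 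Applied with $a=\ell+1$ and $b=L$, this gives
\[
f(\ell+1+L)\;\gtrsim\;\min\bigl(f(\ell+1),\,r^{-\beta}\bigr).
\]

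With this bound, the defining condition of $\ell^{*}$ is implied by the two simpler conditions $\ell \lesssim k r^{-\beta}$ and $\ell \lesssim k f(\ell+1)$. The first is linear in $\ell$ and yields $\ell\lesssim k/r^{\beta}$. For the second, one further application of doubling gives $f(\ell+1)\geq c_1 f(\ell)$ for $\ell\geq 1$ (since $\ell+1\leq 2\ell$), so it is implied by $\Psi(\ell)=\ell/f(\ell)\lesssim k$, i.e.\ by $\ell\leq\Psi^{-1}(c_2 k)$ for a suitable $c_2>0$. Taking $\ell$ to be the integer part of the minimum of these two thresholds produces an admissible choice, witnessing $\ell^{*}\gtrsim \min\{k/r^{\beta},\Psi^{-1}(c_2 k)\}$ and completing the argument.

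The argument is essentially a clean two-scale optimization and the main obstacle is bookkeeping the doubling constants through the two relevant regimes: when $\ell$ is small compared to $L$ the quantity $f(\ell+1+L)$ is comparable to $f(L)\asymp r^{-\beta}$ and only the first threshold binds, whereas when $\ell$ is large compared to $L$ it is comparable to $f(\ell+1)$ and only the $\Psi^{-1}$ threshold binds. The inequality $f(a+b)\gtrsim\min(f(a),f(b))$ is the device that subsumes both regimes into a single estimate and avoids a formal case split.
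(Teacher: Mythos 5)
Your proof is correct and follows essentially the same route as the paper: the paper's proof of \cref{cor:small_ball} likewise rests on the inequality $\Lambda_Q(2^{\ell+1}\Phi^{-1}(c/r)) \geq c_1\min\{\Lambda_Q(2^{\ell}),\Lambda_Q(\Phi^{-1}(c/r))\} \geq c_2\min\{\Lambda_Q(2^\ell),r^{-\beta}\}$ obtained from doubling plus monotonicity, which is exactly your $f(a+b)\gtrsim\min(f(a),f(b))$ step, followed by the same two-threshold optimization over $\ell$. The only cosmetic difference is that the paper flags that a second application of doubling is needed to justify $\Lambda_Q(\Phi^{-1}(c/r))\gtrsim r^{-\beta}$ (since $\Phi(\Phi^{-1}(\eps))\leq\eps$ goes the wrong way a priori), a point you pass over with a bare $\gtrsim$, but this does not change the argument.
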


Again, if 
 $Q(x,y)>0$ whenever $x$ and $y$ are adjacent in $G$ then $\Lambda_Q \gtrsim \Lambda \gtrsim \Phi^2$ by Cheeger's inequality, so we can always apply this corollary with $\beta=2$ under the assumption that $\Lambda_Q(2^n)$ is doubling.
 As discussed around \eqref{eq:Grigoryan} above, the assumption that $\Lambda(2^n)$ is doubling ensures by the results of \cite{MR1418518,coulhon1997diagonal} that
\[
P^{2n}(o,o) \simeq \exp[-\Psi^{-1}(n)],
\]
so that the second term in the minimum can be thought of as accounting for the trivial fact that $\P(d(X_0,X_k)\leq r) \geq \P(X_k=X_0)$.

\begin{proof}[Proof of \cref{cor:small_ball} given \cref{thm:main_inequality}]
The assumption that $\Lambda(2^n)$ is doubling ensures that there exist constants $c_1,c_2>0$ such that
\[\Lambda_Q(2^{\ell+1}\Phi^{-1}(c/r)) \geq c_1 \min\{\Lambda(2^{\ell}),\Lambda_Q(\Phi^{-1}(c/r))\} \geq c_2 \min\{\Lambda_Q(2^\ell),r^{-\beta}\}, \]
where the second inequality follows from a second application of the doubling property together with the assumption that $\Lambda_Q \gtrsim \Phi^\beta$. Thus, \cref{thm:main_inequality} implies that there exists a constant $C<\infty$ such that
\[
\P(d(X_0,X_k)\leq r) \leq 2\exp\left[ - \frac{\log 2}{2}\max\left\{\ell \geq 0 : k\geq \frac{C \ell}{\Lambda(2^{\ell})} \text{ and } k \geq C\ell r^\beta \right\} \right].
\]
This is easily seen to be equivalent to the claim.
\end{proof}

As before, the regularity assumption we make on $\Lambda$ is known to hold in every superpolynomial growth example where $\Lambda$ has been computed explicitly \cite[Table 1]{bendikov2012spectral} and is plausibly true for \emph{every} transitive graph of superpolynomial growth.  This is related to Grigorchuk's \emph{gap conjecture} \cite{MR3174281}, which states that $\operatorname{Gr}(r)\gtrsim \exp[\sqrt{r}]$ for every finitely generated group of superpolynomial growth. Thus, together with the Coulhon--Saloff-Coste inequality \eqref{eq:Coulhon_Saloff_Coste} and Cheeger's inequality \eqref{eq:Cheeger}, the extension of the gap conjecture to transitive graphs would imply in particular that $\Lambda(2^n)\gtrsim n^{-4}$ whenever $G$ has superpolynomial growth, and from here it would seem very reasonable that $\Lambda(2^n)$ is always doubling when $G$ has superpolynomial growth. On the other hand, the best quantitative results on the gap conjecture are very far from verifying this \cite{shalom2010finitary}. 
In  \cref{sec:multilateral} we prove that this regularity property is satisfied for a class of examples including the first Grigorchuk group.

\begin{remark}
 The bound of \cref{cor:small_ball} is optimal for the simple random walk on the lamplighter group $\Z_2 \wr \Z$, which has $\Lambda(n) \simeq (\log n)^{-2}$. Indeed, the probability that the simple random walk on $\Z$ stays in $[-r,r]$ for $k$ steps is of order $\exp[-\Theta(k/r^2)]$, and on this event the lamplighter walk has distance at most $O(r)$ from the origin at time $k$. The upper bound in this example can also be deduced as an application of \cref{cor:small_ball} since $\Lambda \simeq (\log n)^{-2}$ satisfies the required regularity properties.
\end{remark}

\section{Proof of the main theorem}

In this section we prove \cref{thm:main_inequality,thm:main_inequality_general}.
  In contrast to previous works on diffusive lower bounds for random walks on groups, our proof does not rely on embeddings into Hilbert space or martingales in any way. Our proof is instead inspired by the theory of \emph{actions on spaces with measured walls} \cite{cherix2004spaces,mukherjee2023haagerup} developed for the study of the Haagerup property, and can be interpreted in terms of a certain equivariant embedding of the graph into $L^1$. (More accurately, we construct a different embedding for each scale.) Although these perspectives inspired our work, we will formulate the proof in an elementary way without reference to measured walls or $L^1$ embeddings since these notions are not really needed in the proof.


Let $Q$ be a symmetric stochastic matrix defined on a countable set $V$.
Following \cite{hermon2023relaxation}, we introduce for each $\ell\geq 1$ and $n\geq 0$ the quantity $\chi_Q(n,\ell)$ defined by
\[
\chi_Q(n,\ell) = \inf \left\{k\geq 0: \|Q^k\mathbbm{1}_W\|_2^2 \leq 2^{-\ell} \|\mathbbm{1}_W\|_2^2 \text{ for every $W \subseteq V$ with $|W|\leq n$} \right\}.
\]
This quantity can be related to the probability that a random walk started at a uniform point of a set $W$ still belongs to  $W$ at time $k$ using Cauchy-Schwarz: Letting $\P_W$ denote the law of the random walk started at a uniform random point of $W$, we have that 
\[
\P_W(X_k \in W) = \frac{1}{|W|}\langle \mathbbm{1}_W, Q^n \mathbbm{1}_W \rangle \leq \frac{\|Q^n \mathbbm{1}_W\|_2\|\mathbbm{1}_W\|_2}{\|\mathbbm{1}_W\|_2^2} \leq 2^{-\ell/2} 
\]
for every $k,\ell\geq 1$ such that $k\geq \chi_Q(|W|,\ell)$. Our proof relies heavily on the following proposition, which was proven by Hermon in \cite[Proposition 1.6]{hermon2023relaxation}. This proposition is closely related to the inequalities proven in \cite[Section 3]{HermonHutchcroftIntermediate} and can be thought of an as infinite-volume analogue of the $L^\infty$ mixing time bounds of Goel, Montenegro, and Tetali \cite{MR2199053}.

\begin{prop}
\label{prop:chi_n_ell}
 Let $Q$ be a symmetric stochastic matrix defined on a countable set $V$. Then 
\[
\chi_Q(n,\ell) \leq \frac{\ell \log 2}{\Lambda_{Q}(n2^{\ell+1})}
\]
for every $n,\ell \geq 1$.
\end{prop}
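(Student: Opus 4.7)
The plan is to fix a set $W \subseteq V$ with $|W| \leq n$ and study the decay of $u(k) := \|Q^k \mathbbm{1}_W\|_2^2$ as $k$ grows. Writing $f_k := Q^k \mathbbm{1}_W$, the key structural fact will be that $\|f_k\|_1 = |W|$ is conserved (because $Q$ is symmetric stochastic, hence doubly stochastic), while $u(k)$ is nonincreasing. I would aim to show that $u(k)$ decays geometrically through a sequence of dyadic scales, each scale contributing a $\log 2$-length piece whose length is inversely proportional to an appropriate value of $\Lambda_Q$.

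The workhorse will be a truncation lemma combining Markov's inequality with the Dirichlet form $\mathcal{E}(h,h) := \tfrac12 \sum_{x,y} Q(x,y)(h(x)-h(y))^2 = \langle (I-Q)h, h\rangle$. For any threshold $t > 0$, I will take $g_t := (f_k - t)_+$ and use three observations: (i) $g_t$ is supported on $\{f_k > t\}$, which has size at most $|W|/t$ by Markov; (ii) the pointwise bound $|(a-t)_+ - (b-t)_+| \leq |a-b|$ yields $\mathcal{E}(g_t, g_t) \leq \mathcal{E}(f_k, f_k)$; and (iii) for a judicious choice $t \asymp u(k)/|W|$, the mass $\|g_t\|_2^2$ is a definite fraction of $u(k)$. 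Combining these with the defining property of the spectral profile applied to $g_t$ on its support of size $\lesssim |W|^2 / u(k)$, I obtain the core estimate
\[
\mathcal{E}(f_k, f_k) \;\geq\; \mathcal{E}(g_t, g_t) \;\geq\; \Lambda_Q\!\bigl(C\,|W|^2/u(k)\bigr)\,\|g_t\|_2^2 \;\gtrsim\; \Lambda_Q\!\bigl(C\,|W|^2/u(k)\bigr)\cdot u(k).
\]

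Next I would convert this Dirichlet-form lower bound into a one-step $L^2$ decay inequality. For a positive semidefinite symmetric stochastic operator $P$ one has the clean bound $\|Pf\|_2^2 \leq \|f\|_2^2 - \mathcal{E}_P(f,f)$, since $I - P^2 \geq I - P$ when $P \geq 0$; when $Q$ is not positive semidefinite I would apply the argument to the lazy walk $\tilde Q := (I+Q)/2$ (whose spectral profile equals $\tfrac12 \Lambda_Q$, so constants merely shift), and then transfer back. Either way, one obtains $u(k+1) \leq u(k)\bigl(1 - c\,\Lambda_Q(C|W|^2/u(k))\bigr)$ for absolute constants $c, C > 0$.

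The argument is concluded by a dyadic stage analysis. Let stage $j$ denote the set of times for which $u(k) \in (2^{-j-1}|W|,\,2^{-j}|W|]$. On this range $|W|^2/u(k) \leq 2^{j+1}|W| \leq 2^{j+1} n$, so by monotonicity of $\Lambda_Q$ the one-step decrement factor in stage $j$ is at most $1 - c\,\Lambda_Q(C\cdot 2^{j+1} n)$, and hence stage $j$ requires at most $T_j = \log 2 / \bigl(c\,\Lambda_Q(C\cdot 2^{j+1} n)\bigr)$ steps. Summing $j = 0, 1, \ldots, \ell-1$ and invoking monotonicity of $\Lambda_Q$ once more,
\[
\chi_Q(n,\ell) \;\leq\; \sum_{j=0}^{\ell-1} T_j \;\leq\; \frac{\ell \log 2}{c\,\Lambda_Q(C\cdot 2^{\ell+1} n)},
\]
which is the asserted inequality after tuning $c, C$. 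The main obstacle I expect is making the absolute constants clean enough to recover the exact form $\ell \log 2 / \Lambda_Q(n 2^{\ell+1})$: this requires a careful optimization of the truncation threshold $t$ so that $\|g_t\|_2^2$ and $|\mathrm{supp}(g_t)|$ are balanced against each other, and, in the non-PSD case, a sharp way to transfer the decay estimate from $\tilde Q$ back to $Q$ without eating into the geometric rate.
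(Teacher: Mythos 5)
First, a point of comparison: the paper does not actually prove this proposition — it is quoted verbatim from Hermon's work (Proposition 1.6 of the cited reference), so you are reconstructing an external input rather than an argument appearing in this paper. Your overall architecture (truncation lemma, conversion of a Dirichlet-form lower bound into one-step $L^2$ decay, dyadic stage count) is the standard spectral-profile iteration in the style of Goel--Montenegro--Tetali, and steps (i)--(iii) and the stage summation are fine as far as they go.

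The genuine gap is your treatment of the case where $Q$ is not positive semidefinite. Passing to the lazy walk $\tilde Q = (I+Q)/2$ and ``transferring back'' cannot work: the quantity being controlled is $\|Q^k\mathbbm{1}_W\|_2^2 = \int_{[-1,1]} x^{2k}\, d\mu_W(x)$, where $\mu_W$ is the spectral measure of $\mathbbm{1}_W$ with respect to $Q$, and spectral mass of $\mu_W$ near $-1$ keeps this quantity large while making $\|\tilde Q^k\mathbbm{1}_W\|_2^2 = \int ((1+x)/2)^{2k}\, d\mu_W(x)$ decay rapidly; no estimate on the lazy walk yields an estimate on $Q$ itself. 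The correct starting point is the exact identity $\|Q^k\mathbbm{1}_W\|_2^2 - \|Q^{k+1}\mathbbm{1}_W\|_2^2 = \langle (I-Q^2)f_k, f_k\rangle$ with $f_k = Q^k\mathbbm{1}_W \geq 0$, after which the real content of the proposition is to bound the Dirichlet form of $Q^2$ from below in terms of $\Lambda_Q$ rather than $\Lambda_{Q^2}$. That comparison fails at the operator level ($I-Q^2 \not\geq c(I-Q)$ when $Q$ has spectrum near $-1$, since $1-x^2 \to 0$ while $1-x\to 2$ there), so it must exploit the entrywise nonnegativity of $Q$ and of $f_k$; this is precisely the step your writeup omits, and it is not a matter of ``tuning constants.''

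Separately, even in the PSD case your route cannot recover the inequality verbatim: the truncation lemma inherently costs a prefactor $c<1$ and a dilation $C>1$ inside the argument of $\Lambda_Q$ (with the natural threshold one gets something like $\mathcal{E}(f,f) \geq \tfrac14 \Lambda_Q\bigl(4\|f\|_1^2/\|f\|_2^2\bigr)\|f\|_2^2$), so the endpoint is $\ell\log 2/\bigl(c\,\Lambda_Q(Cn2^{\ell})\bigr)$ rather than $\ell\log 2/\Lambda_Q(n2^{\ell+1})$. For every application in this paper, which are all stated up to $\simeq$, such a loss would be harmless; but as a proof of the proposition as stated, your argument establishes a strictly weaker bound.
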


The assumption that $Q$ is symmetric is not really needed, but lets us continue assuming that the counting measure is stationary. If one formulates the definitions correctly then it suffices that $Q$ is reversible. See \cite{hermon2023relaxation} for these more general statements. In \cite{hermon2023relaxation} it is also proven that $\chi_Q(n,1)\geq c/\Lambda(n)$ for some positive universal constant $c$, so that the inequality is always sharp for small values of $\ell$.

\begin{proof}[Proof of \cref{thm:main_inequality_general}]
Let $\operatorname{Haar}$ denote the Haar measure on $\Gamma$, normalized so that $\operatorname{Haar}(\operatorname{Stab}_o)=1$. The assumption that $\Gamma$ is unimodular ensures that
\begin{equation}
\label{eq:Haar1}
\operatorname{Haar}(\{\gamma \in \Gamma : \gamma x = y \}) = 1 \qquad \text{ for all $x,y\in V$}
\end{equation}
(indeed, this can be taken as the \emph{definition} of unimodularity) and moreover that
\begin{equation}
\label{eq:Haar2}
\operatorname{Haar}(\{\gamma \in \Gamma : \gamma e_1 = e_2 \}) = \frac{\mathbbm{1}(e_2 \in \Gamma e_1)}{\deg_{e_1}(o)},
\end{equation}
where we define $\deg_{e}(o):=\#\{e' \in E^\rightarrow_o: e' \in \Gamma e\}$.

\medskip

Let $n\geq 1$, let $W_n$ be a finite set of vertices with $|W_n|\leq n$ such that $|\partial_E W_n|=\deg(o)\Phi(n) |W_n|$, and write $\partial^\rightarrow_E W_n$ for the set of oriented edges $e$ with $e^-\in W_n$ and $e^+\notin W_n$. We define a metric $d_n$ on $V$ by
\begin{align*}
d_n(x,y) &= \frac{1}{2}\operatorname{Haar}\{\gamma \in \Gamma : x\in \gamma W_n \text{ and } y \notin \gamma W_n \text{ or } x\notin \gamma W_n \text{ and } y \in \gamma W_n \}\\
&=\frac{1}{2}\operatorname{Haar}\{\gamma \in \Gamma : x\in \gamma W_n\} + \frac{1}{2}\operatorname{Haar}\{\gamma \in \Gamma : y \in \gamma W_n\} - \operatorname{Haar}\{\gamma \in \Gamma : x,y\in \gamma W_n\} \\
&= |W_n| - \operatorname{Haar}\{\gamma \in \Gamma : x,y \in \gamma W_n \}.
\end{align*}
The fact that this satisfies the triangle inequality follows from the fact that is a non-negative linear combination of the pseudometrics $\mathbbm{1}(x\in \gamma W_n \text{ and } y \notin \gamma W_n \text{ or } x\notin \gamma W_n \text{ and } y \in \gamma W_n)$. We also define the normalized metric $\hat d_n(x,y)$ by
\[
\hat d_n(x,y) = \frac{d_n(x,y)}{\max_{u\sim v} d_n(u,v)},
\]
where the max is taken over all pairs of vertices that are neighbours in $G$. This normalization ensures that $\hat d_n$ is bounded above by the graph distance on $G$. 
To proceed, we will argue that $d_n(X_0,X_k)$ is close to the maximal distance $|W_n|$ with high probability when $k$ is sufficiently large as a function of $n$ (depending on the spectral profile), and that the normalizing factor is bounded by a quantity depending on the isoperimetric profile.

\medskip

\noindent \textbf{Bounding the normalization factor.}
The normalization factor used to define $\hat d_n$ can be bounded in terms of the isoperimetric profile: For each oriented edge $e$ of $G$ we have that
\begin{multline*}
d_n(e^-,e^+)=\frac{1}{2}\operatorname{Haar}(\{\gamma \in \Gamma : e \in \partial_E^\rightarrow (\gamma W_n)\})+\frac{1}{2}\operatorname{Haar}(\{\gamma \in \Gamma : e^\leftarrow \in \partial_E^\rightarrow (\gamma W_n)\}) \\
= \frac{1}{2}\sum_{e' \in \partial_E^\rightarrow W_n} \left[\operatorname{Haar}(\{\gamma \in \Gamma : e = \gamma e'\})+ \operatorname{Haar}(\{\gamma \in \Gamma : e^\leftarrow = \gamma e'\})\right] 
\\
= \frac{|\partial_E^\rightarrow W_n \cap (\Gamma e)|}{2\deg_e(o)}+\frac{|\partial_E^\rightarrow W_n \cap (\Gamma e^\leftarrow)|}{2\deg_{e^\leftarrow}(o)} \leq \frac{1}{2}\left[\frac{\deg(o)}{\deg_e(o)}+\frac{\deg(o)}{\deg_{e^\leftarrow}(o)}\right]\Phi(n)|W_n|,
\end{multline*}
where $e^\leftarrow$ denotes the reversal of $e$. (In fact unimodularity implies that $\deg_e(o)=\deg_{e^\leftarrow}(o)$, but we will not need this.)
Since the edge $e$ was arbitrary, it follows that
\begin{equation}
\label{eq:normalizing_constant}
\max_{u\sim v} d_n(u,v) \leq \left(\max_{e\in E_o^\rightarrow} \frac{\deg(o)}{\deg_e(o)} \right) \Phi(n)|W_n|.
\end{equation}

\medskip
\noindent \textbf{The walk is at near-maximal distance with high probability.} This will be proven via a first moment calculation.
We have by Fubini's Theorem that
\[
\E [|W_n|-d_n(X_0,X_k)] = \E\left[\operatorname{Haar}\{\gamma \in \Gamma : X_0,X_k\in \gamma W_n\}\right] = \int_\gamma \sum_{x\in V} \mathbbm{1}(o,x\in \gamma W_n)Q^k(o,x) \dif \operatorname{Haar}(\gamma),
\]
and using the assumption that $Q$ is $\Gamma$-diagonally invariant we obtain that
\begin{align*}
\int_\gamma \sum_{x\in V} \mathbbm{1}(o,x\in \gamma W_n)Q^k(o,x) \dif \operatorname{Haar}(\gamma) &= \int_\gamma \sum_{x\in V} \mathbbm{1}(\gamma^{-1}o,\gamma^{-1}x\in  W_n)Q^k(o,x) \dif \operatorname{Haar}(\gamma) \\
&=\int_\gamma \sum_{x\in V} \mathbbm{1}(\gamma^{-1}o,\gamma^{-1}x\in  W_n)Q^k(\gamma^{-1}o,\gamma^{-1}x) \dif \operatorname{Haar}(\gamma)\\
&=\int_\gamma \sum_{x\in V} \mathbbm{1}(\gamma^{-1}o, x\in  W_n)Q^k(\gamma^{-1}o,x) \dif \operatorname{Haar}(\gamma)\\
&=  \sum_{w \in W_n} \P_w(X_k \in W_n) = |W_n|\P_{W_n}(X_k\in W_n),
\end{align*}
where the penultimate line follows from the fact that $\gamma:V\to V$ is a bijection and where we used \eqref{eq:Haar1} in the last line.
As such, it follows from the definition of $\chi_Q(n,\ell)$ that if $k\geq \chi_Q(n,\ell)$ then
\[\E [|W_n|-d_n(X_0,X_k)] \leq 2^{-\ell/2}|W_n|.\]
In particular, we have by Markov's inequality that 
\begin{equation}
\label{eq:far_away_high_prob}
\P(d_n(X_0,X_k) \leq |W_n|/2) \leq 2^{1-\ell/2}
\end{equation}
whenever $k\geq \chi_Q(n,\ell)$.

\medskip
\noindent \textbf{Completing the proof.} It follows from \eqref{eq:normalizing_constant}, \eqref{eq:far_away_high_prob} and the fact that $d(X_0,X_k) \geq \hat d_n(X_0,X_k)$ that
\begin{equation}
\label{eq:nearly_done}
\P\left(d(X_0,X_k)\leq \frac{1}{2}\left(\max_{e\in E_o^\rightarrow} \frac{\deg(o)}{\deg_e(o)} \right)^{-1} \frac{1}{\Phi(n)}\right) \leq 2^{1-\ell/2}
\end{equation}
whenever $k\geq \chi_Q(n,\ell)$. Applying \cref{prop:chi_n_ell} to bound $\chi_Q(n,\ell) \leq \ell \log 2/ \Lambda_Q(n2^{\ell+1})$ yields that \eqref{eq:nearly_done} holds whenever $k\geq\ell \log 2/ \Lambda_Q(n2^{\ell+1})$, which is equivalent to the claimed inequality.
\end{proof}

\section{Regularity assumptions are satisfied by multilateral groups}
\label{sec:multilateral}

 We now describe a class of groups for which the regularity assumptions needed to apply \cref{cor:occupation_measure,cor:small_ball} hold. Recall that two groups $\Gamma_1,\Gamma_2$ are said to be \textbf{commensurable}, denoted $\Gamma_1 \approx \Gamma_2$, if they have isomorphic subgroups of finite index. A group $G$ is said to be \textbf{multilateral} if it is commensurable with a non-trivial direct power of itself, i.e.\ if $\Gamma \approx \Gamma^m$ for some $m>1$.  More generally, we define a group $\Gamma$ to be \textbf{supermultilateral} if it has a \emph{subgroup} $H$ such that $H \approx \Gamma^m$ for some $m\geq 1$. Some of the best known examples of groups of superpolynomial growth are supermultilateral, including the first Grigorchuk group \cite[Proposition 6.1]{grigorchuk2006groups} and Thompson's group $F$ \cite[Section 4.2]{bader2014weak}; the asymptotics of $\Phi$ and $\Lambda$ are not known explicitly in either example. (Indeed, it a major open problem whether or not Thompson's group $F$ is amenable. It is known that if it is amenable then its isoperimetric and spectral profiles decay extremely slowly \cite{moore2013fast}.)

\begin{prop}
\label{prop:multilateral}
Let $\Gamma$ be a finitely generated group, let $G$ be a Cayley graph of $\Gamma$, and let $\Phi$, $\Lambda$, and $\operatorname{Gr}^{-1}$ denote the isoperimetric profile, spectral profile, and inverse growth function of $G$. If $\Gamma$ is supermultilateral then the functions $\Phi(2^n)$, $\Lambda(2^n)$, and $\operatorname{Gr}^{-1}(2^n)$ are all doubling.
\end{prop}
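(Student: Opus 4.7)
The plan is to exploit the self-similarity $H \leq \Gamma$ with $H \approx \Gamma^m$ (necessarily with $m \geq 2$ for the conclusion to be non-trivial) to show that each of the profiles $f \in \{\Phi, \Lambda, \operatorname{Gr}^{-1}\}$ satisfies the scaling identity $f_\Gamma(n^{1/m}) \simeq f_\Gamma(n)$. Granted this, doubling of $f_\Gamma(2^n)$ is immediate by monotonicity: writing $x = 2^n$, the decreasing profiles give $\Phi_\Gamma(x^2) \geq \Phi_\Gamma(x^m) \simeq \Phi_\Gamma(x)$ (and similarly for $\Lambda$), while the increasing $\operatorname{Gr}^{-1}$ gives $\operatorname{Gr}_\Gamma^{-1}(x^2) \leq \operatorname{Gr}_\Gamma^{-1}(x^m) \simeq \operatorname{Gr}_\Gamma^{-1}(x)$, using $x^2 \leq x^m$ and $m \geq 2$.

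To establish the scaling identity I would combine three ingredients. First, commensurable finitely generated groups have quasi-isometric Cayley graphs, so $f_H \simeq f_{\Gamma^m}$ for each of the three profiles. Second, I would invoke the product formulas
\[
\operatorname{Gr}^{-1}_{\Gamma^m}(n) \simeq \operatorname{Gr}^{-1}_\Gamma(n^{1/m}), \qquad \Phi_{\Gamma^m}(n) \simeq \Phi_\Gamma(n^{1/m}), \qquad \Lambda_{\Gamma^m}(n) \simeq \Lambda_\Gamma(n^{1/m}).
\]
For $\operatorname{Gr}^{-1}$ both directions follow from the $\ell^1$-containments $B_r^{\Gamma^m} \subseteq (B_r^\Gamma)^m \subseteq B_{mr}^{\Gamma^m}$. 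The upper bounds on $\Phi$ and $\Lambda$ come by testing on product sets and tensor-product functions, and the matching lower bounds are standard in the isoperimetric-inequality literature for product groups, proved via a fiber-slicing argument on $\Gamma^m$.

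Third, I would prove the subgroup inequalities $\operatorname{Gr}^{-1}_H \geq \operatorname{Gr}^{-1}_\Gamma$, $\Phi_H \leq \Phi_\Gamma$, and $\Lambda_H \lesssim \Lambda_\Gamma$ (for compatibly generated $H \leq \Gamma$) by coset averaging. For $\Phi$, taking a near-optimal $W \subset \Gamma$ with $|W| \leq n$ and setting $A_g := g^{-1}(W \cap gH) \subset H$, one observes that $\sum_g |\partial_H A_g|$ is exactly the within-coset contribution to $|\partial_\Gamma W|$, so weighted pigeonhole produces a coset with $|\partial_H A_g|/|A_g| \leq \Phi_\Gamma(n)$ and $|A_g| \leq n$. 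The same decomposition applied to a test function $f$ for $\Lambda_\Gamma$, combined with the Dirichlet-form inequality
\[
\langle (I - P_\Gamma) f, f\rangle \geq \tfrac{|S \cap H|}{|S|} \sum_g \langle (I - P_H) h_g, h_g\rangle_H
\]
(obtained by separating edges according to whether the generator used lies in $S \cap H$ or $S \setminus H$, and bounding the cross-term by Cauchy--Schwarz), yields $\Lambda_H \lesssim \Lambda_\Gamma$. Chaining the three ingredients and the trivial monotone bound $f_\Gamma(n^{1/m}) \geq f_\Gamma(n)$ then produces the scaling identity.

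The main technical obstacle is the lower bound in the product formula for $\Phi$ and $\Lambda$---the fiber-slicing step that controls the boundary of an arbitrary subset of $\Gamma^m$ below by the isoperimetric profile of $\Gamma$ at scale $n^{1/m}$. The other ingredients (commensurability invariance, the easy directions of the product formulas, and the subgroup inequalities) are comparatively routine.
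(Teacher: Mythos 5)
Your proposal follows the same route as the paper: commensurability invariance of the three profiles, monotonicity under passing to a finitely generated subgroup, and the product formulas $\Phi_{\Gamma^m}(n)\simeq\Phi_\Gamma(n^{1/m})$ etc., chained to give the scaling relation $f(n)\simeq f(n^{1/m})$. Two remarks. First, the "main technical obstacle" you identify — the lower bound in the product formula for $\Phi$ and $\Lambda$ — is not something you need to reprove by fiber-slicing: it is a cited theorem of Coulhon, Grigor'yan, and Levin, and the paper simply invokes it. Second, and this is the one genuine (though easily fixable) gap: the final step is \emph{not} immediate from monotonicity as you claim. The relation $\simeq$ in this paper permits multiplicative constants \emph{inside} the argument, so the scaling relation only gives, say, $\Phi(N)\geq c_1\Phi\bigl((c_2N)^{1/m}\bigr)$. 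Taking $N=2^{2n}$ yields $\Phi(2^{2n})\geq c_1\Phi\bigl(c_2^{1/m}2^{2n/m}\bigr)$, and when $m=2$ the inner point is $c_2^{1/2}2^{n}$, which exceeds $2^n$ whenever $c_2>1$; since $\Phi$ is decreasing you then cannot conclude $\Phi(2^{2n})\geq c\,\Phi(2^n)$. The constants $c_2$ come from quasi-isometry invariance and the product formula and are not under your control. The paper handles this by iterating the scaling inequality a bounded number of times to reach exponent $1/4$, at which point the inner constant is absorbed by monotonicity because $C(2^{2n})^{1/4}=C2^{n/2}\leq 2^n$ for large $n$. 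You should add this iteration (one more application of the same inequality suffices); with it, your argument is complete and coincides with the paper's.
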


\begin{proof}[Proof of \cref{prop:multilateral}]
It is a standard fact that if $G_1$ and $G_2$ are Cayley graphs of commensurable groups then $G_1$ and $G_2$ are quasi-isometric and satisfy $\Phi_{G_1} \simeq \Phi_{G_2}$, $\Lambda_{G_1} \simeq \Lambda_{G_2}$, and $\operatorname{Gr}^{-1}_{G_1} \simeq \operatorname{Gr}^{-1}_{G_2}$. Moreover, if $G$ is a Cayley graph of a finitely generated group $\Gamma$ and $G'$ is a Cayley graph of a finitely generated subgroup $H$ of $\Gamma$ then $\Phi_G \gtrsim \Phi_H$, $\Lambda_G \gtrsim \Lambda_H$, and $\operatorname{Gr}^{-1}_G \lesssim \operatorname{Gr}^{-1}_H$. Finally, it follows from a theorem of Coulhon, Grigor'yan, and Levin \cite{coulhon2003isoperimetric} that
\[
\Phi_{G^m}(n) \simeq \Phi_G(n^{1/m}), \qquad \Lambda_{G^m}(n) \simeq \Lambda_{G}(n^{1/m}), \qquad \text{ and } \qquad \operatorname{Gr}^{-1}_{G^m}(n) \simeq \operatorname{Gr}^{-1}_{G}(n^{1/m}).
\]
Thus, if $\Gamma$ is supermultilateral then
\[
\Phi(n) \gtrsim \Phi(n^{1/m}), \qquad \Lambda(n) \gtrsim \Lambda(n^{1/m}), \qquad \text{ and } \qquad \operatorname{Gr}^{-1}(n) \lesssim \operatorname{Gr}^{-1}(n^{1/m})
\]
for some integer $m\geq 1$. As such, it suffices to prove that if $f$ is an increasing function with $f(n) \lesssim f(n^{c})$ for some $0\leq c<1$ then $f(2^n)$ is doubling. To see this, note that we can iterate the inequality $f \lesssim f(n^c)$ some finite number of times to obtain that $f(n) \lesssim f(n^{1/4})$, so that there exist positive constants $C_1$ and $C_2$ such $f(n)\leq C_1 f(C_2 n^{1/4})$ for all $n\geq 1$. If $n$ is sufficiently large then $2^{n}\geq C_2 (2^{2n})^{1/4}$ and it follows that if $n$ is sufficiently large then
$f(2^{2n}) \leq C_1 f(2^n)$ as claimed.
\end{proof}

The following lemma shows that the regularity condition guaranteed by \cref{prop:multilateral} is also sufficient to apply \cref{cor:occupation_measure}.

\begin{lemma}
\label{lem:log_doubling}
Let $f:\N\to (0,\infty)$ be monotone. If $f(2^n)$ is doubling  then $f$ is roughly slowly varying. 
\end{lemma}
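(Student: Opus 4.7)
My plan is to work on the log-log scale and produce a slowly varying function equivalent to $f$ via an averaging procedure. Without loss of generality assume $f$ is decreasing (the increasing case is symmetric) and extend $f$ monotonically to $[1,\infty)$. The first step will be to translate the discrete doubling hypothesis $f(2^{2n}) \geq c f(2^n)$ into the real-variable estimate $f(x^2) \geq c' f(x)$ for all sufficiently large real $x$; this is a routine consequence of monotonicity and one iteration of the discrete bound.

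Setting $\phi(y) := \log f(e^y)$, the above reads $\phi(2y) - \phi(y) \in [-C, 0]$ for $C := -\log c'$ and all large $y$. Iterating this on geometric scales yields the crude but crucial estimate $|\phi(y)| = O(\log y)$, and moreover implies that $\phi$ oscillates by at most $C$ across any interval $[y/2, y]$ (since any such $t$ lies between $\phi(y/2)$ and $\phi(y)$, which differ by at most $C$). The key construction will be the smoothed proxy
\[
\tilde\phi(y) \;:=\; \frac{2}{y}\int_{y/2}^{y} \phi(t)\,dt,
\]
for which the bounded-oscillation estimate on $[y/2, y]$ immediately gives $|\tilde\phi - \phi| \leq C$.

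A direct differentiation will produce $\tilde\phi'(y) = \tfrac{2}{y}(\phi(y) - \tfrac{1}{2}\phi(y/2)) - \tilde\phi(y)/y$, and inserting the logarithmic bound $|\phi(y)| = O(\log y)$ forces $|\tilde\phi'(y)| = O((\log y)/y) \to 0$ as $y \to \infty$. Integrating against any fixed $s > 0$ gives $\tilde\phi(y+s) - \tilde\phi(y) \to 0$, which is exactly the statement that $L(x) := \exp(\tilde\phi(\log x))$ is slowly varying. Combined with $|\tilde\phi - \phi| \leq C$ we will obtain $L(x) \asymp f(x)$, hence $f \simeq L$, so that $f$ is roughly slowly varying.

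The main conceptual obstacle is recognising that although the discrete doubling constant may be strictly less than $1$ (so that $f$ itself will typically fail to be slowly varying, with ratios $f(2^{n+1})/f(2^n)$ remaining bounded away from $1$ as $n \to \infty$), averaging $\phi$ on a window of geometric width $[y/2, y]$ smooths out these persistent jumps while preserving $f$ up to a bounded multiplicative factor. The logarithmic growth of $|\phi|$ (itself an immediate consequence of doubling) is precisely what lets the $1/y$ factor coming from the window size dominate and force $\tilde\phi' \to 0$; once this viewpoint is adopted the required estimates are elementary calculus.
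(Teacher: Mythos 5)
Your argument is correct, and it takes a genuinely different route from the paper's. The paper's proof is a one-line construction: it takes $\tilde f$ to be the geometric interpolation of $f$ between consecutive dyadic points $2^m,2^{m+1}$ and asserts that this is slowly varying and $\simeq f$. The comparability $\tilde f\asymp f$ is exactly your observation that doubling plus monotonicity pins the consecutive ratios $f(2^{m+1})/f(2^m)$ inside $[c,1]$; but for the \emph{interpolation} to be slowly varying one needs these ratios to tend to $1$, and doubling only controls their \emph{product} over each dyadic block $n\le m<2n$ --- it permits a drop by a fixed factor $c<1$ at every $m$ of the form $2^k$ forever, in which case the dyadic interpolation is not slowly varying even though $f$ itself is $\asymp(\log x)^{-\alpha}$ and hence roughly slowly varying. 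Your window average $\tilde\phi(y)=\tfrac{2}{y}\int_{y/2}^{y}\phi$ is precisely what handles this: the window $[y/2,y]$ spans on the order of $y$ in additive length (hence on the order of $\log_2 y$ dyadic scales) while the total oscillation of $\phi$ over it stays bounded by $C$, so isolated non-vanishing jumps are diluted and $\tilde\phi'\to 0$. In fact your derivative identity gives more than you use: writing $\tilde\phi'(y)=\tfrac{2}{y}\bigl(\phi(y)-\tfrac12\phi(y/2)\bigr)-\tilde\phi(y)/y$ and substituting $\phi(y/2)=\phi(y)+O(C)$ and $\tilde\phi(y)=\phi(y)+O(C)$, the leading terms cancel exactly and $|\tilde\phi'(y)|=O(C/y)$, so the logarithmic growth bound on $\phi$ is not actually needed. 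Two small points to make explicit in a write-up: $\tilde\phi$ is locally Lipschitz (hence absolutely continuous), so the integration of $\tilde\phi'$ is legitimate even though $\phi$ is only a monotone step function; and the bound $|\tilde\phi(y+s)-\tilde\phi(y)|\le s\,\sup_{t\ge y}|\tilde\phi'(t)|$ is uniform over $s\in[0,\log 2]$, which is what the paper's (uniform) definition of slowly varying requires.
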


\begin{proof}[Proof of \cref{lem:log_doubling}]
The function $\tilde f$ defined by
\[\tilde f(n) := f(2^{\lfloor \log_2 n\rfloor})^{\log_2 n - \lfloor \log_2 n\rfloor}f(2^{\lfloor \log_2 n\rfloor+1})^{\lfloor \log_2 n\rfloor+1-\log_2 n} \]
is easily verified to be slowly varying and satisfy $f \simeq \tilde f$ whenever $f$ is a positive monotone function and $f(2^n)$ is doubling.
\end{proof}

\section*{Acknowledgements}
The work was supported by NSF grant DMS-1928930. We thank Jonathan Hermon, Igor Pak, Christophe Pittet, Laurent Saloff-Coste, Matt Tointon, and Tianyi Zheng for helpful discussions.

 \setstretch{1}
 \footnotesize{
  \bibliographystyle{abbrv}
  \bibliography{unimodularthesis.bib}
  }

\end{document}